\theoremstyle{plain}
\newtheorem*{theorem*}{Theorem}
\newcounter{intro}
\newtheorem{thm}[intro]{Theorem}
\newtheorem{crl}[intro]{Corollary}
\newtheorem{theorem}{Theorem}[section]
\theoremstyle{definition}
\newtheorem{defi}[theorem]{Definition}
\newtheorem{lemm}[theorem]{Lemma}
\newtheorem{coro}[theorem]{Corollary}
\newtheorem{prop}[theorem]{Proposition}
\newtheorem{theo}[theorem]{Theorem}
\theoremstyle{definition}
\newtheorem{rem}[theorem]{Remark}
\newtheorem{rems}[theorem]{Remarks}
\newcommand{\R}{\ensuremath{\mathbb R}}
\newcommand{\N}{\ensuremath{\mathbb N}}
\newcommand{\bB}{\ensuremath{\mathbb B}}
\newcommand{\bV}{\ensuremath{\mathbb V}}
\newcommand{\cC}{\mathcal{C}}
\newcommand{\cE}{\mathcal{E}}
\newcommand{\cV}{\mathcal{V}}
\DeclareMathOperator{\un}{\mathbf{1}}
\newcommand{\eps}{\ensuremath{\varepsilon}}
\newcommand{\Kato}{\ensuremath{\mbox{k}}}
\newcommand{\Ricm}{\ensuremath{\mbox{Ric}_{\mbox{\tiny{--}}}}}
\newcommand{\katoT}{\ensuremath{\mbox{k}_T(M^n,g)}}
\newcommand{\kato}{\mathrm{k}}
\newcommand{\katot}{\ensuremath{\mbox{k}_t(M^n,g)}}
\DeclareMathOperator{\bg}{ \overline{\emph{g}}}
\DeclareMathOperator{\vol}{\it \nu}
\DeclareMathOperator{\RCD}{RCD}
\DeclareMathOperator{\BE}{BE}
\newcommand{\Ric}{\ensuremath{\mbox{Ric}}}
\DeclareMathOperator{\tr}{tr}
\newcommand{\di}{\mathop{}\!\mathrm{d}}
\newcommand{\measrestr}{%
  \,\raisebox{-.127ex}{\reflectbox{\rotatebox[origin=br]{-90}{$\lnot$}}}\,%
}
\newcommand{\dist}{\mathsf{d}}
\DeclareMathOperator{\CD}{CD}
\def\cH{\mathcal H}
\def\mcK{\mathcal K}
\def\ra{\rangle}
\def\la{\langle}
\newcommand{\df}{\coloneqq}
\newcommand{\odist}{\overline{\dist}}
\newcommand{\omu}{\overline{\mu}}
\newcommand{\cref}[1]{Corollary~\ref{#1}}
\newcommand{\dref}[1]{Definition~\ref{#1}}
\newcommand{\lref}[1]{Lemma~\ref{#1}}
\newcommand{\pref}[1]{Proposition~\ref{#1}}
\newcommand{\tref}[1]{Theorem~\ref{#1}}
\title[Kato meets Bakry-\'Emery]{Kato meets Bakry-\'Emery}
\author{Gilles Carron}
\address{G. Carron, Nantes Université, CNRS, Laboratoire de Mathématiques Jean Leray, LMJL, UMR 6629, F-44000 Nantes, France.} 
\email{Gilles.Carron@univ-nantes.fr}
\author{Ilaria Mondello}
\address{I. Mondello, Université Paris Est Cr\'eteil, Laboratoire d'Analyse et Math\'ematiques appliqu\'es, UMR CNRS 8050, F-94010 Créteil, France.}
\email{ilaria.mondello@u-pec.fr}
\author{David Tewodrose}
\address{D. Tewodrose, Vrije Universiteit Brussel, Department of Mathematics and Data Science, Pleinlaan 2, B-1050 Brussel, Belgium.}
\email{david.tewodrose@vub.be}
\date{}
\begin{document}
\maketitle

\begin{abstract}
We prove that any complete Riemannian manifold with negative part of the Ricci curvature in a suitable Dynkin class is bi-Lipschitz equivalent to a finite-dimensional $\RCD$ space, by building upon the transformation rule of the Bakry-\'Emery condition under time change. We apply this result to 
show that our previous results on the limits of closed Riemannian manifolds satisfying a uniform Kato bound \cite{CMT1,CMT2} carry over to limits of complete manifolds. We also obtain a weak version of the Bishop-Gromov monotonicity formula for manifolds satisfying a strong Kato bound.
\end{abstract}

\hfill

\textbf{MSC Classification:} 53C21,  53C23.

\hfill

\textbf{Keywords:} Gromov--Hausdorff limits. Kato bounds. Bakry--Émery condition.

\section{Introduction}

In a recent series of articles,  we studied the structure of Gromov--Hausdorff limits of closed Riemannian manifolds with Ricci curvature satisfying some uniform Kato type condition \cite{CMT1,CMT2}.  The aim of this paper is to lift technical restrictions, like  the closedness of the approximating manifolds, and to improve our previous results.

For a complete Riemannian manifold $(M^n,g)$ of dimension $n \ge 2$,  define
\begin{equation*}
 \katot := \sup_{x \in M}\int_0^t\int_M H(s,x,y)\Ricm(y) \di \nu_g(y) \di s
\end{equation*}
for any $t>0$, where $H$ is the heat kernel of $(M,g)$,  $\nu_g$ is the Riemannian volume measure and $\Ricm : M \to \R_+$ is the lowest non-negative function such that
\[
\Ric_x  \ge - \Ricm(x)g_x
\]
for any $x \in M$. From our previous work \cite[Corollary 2.5 and Theorem 4.11]{CMT1}, a classical contradiction argument shows that for any $\eps>0$ there exists $\delta>0$ depending on $n$ and $\eps$ only such that if $(M^n,g)$ is closed and satisfies
$$\katoT \le \delta$$ for some $T>0$,
then for any $p\in M$ there exists a pointed $\RCD(0,n)$ space $(X,\dist,\mu,x)$ such that 
$$\dist_{GH}\left( B^M_{\sqrt{T}}(p), B^X_{\sqrt{T}}(x)\right)\le \eps \,\sqrt{T}.$$
Here $\dist_{GH}$ stands for the Gromov--Hausdorff distance. We briefly recall that for $K\in \R$ and $N\in [1,+\infty]$, an $\RCD(K,N)$ space is a metric measure space with a synthetic notion of Ricci curvature bounded below by $K$ and dimension bounded above by $N$. The main result of this paper is a quantitative improvement of the previous fact:

\begin{thm}\label{Maintheo} Let $(M^n,g)$ be a complete Riemannian manifold of dimension $n\geq 2$. Assume that there exist $T>0$ and $\gamma \in (0,1/(n-2))$ such that
\begin{equation}\label{eq:NewDynkin}\tag{D}
\kato_T(M^n,g)\le \gamma.
\end{equation}
Then there exist constants $K\ge 0$ and $N>n$, both depending on $n$ and $\gamma$ only,  and $h\in \cC^2(M)$ with $0 \le h \le C=C(n,\gamma)$, such that the weighted Riemannian manifold $(M,  e^{2h}g , e^{2h} \nu_g )$ satisfies the $\mathrm{RCD}(-K/T,N)$ condition.  Moreover, if 
\begin{equation}\label{eq:NewDynkin2}\tag{D'}
\kato_T(M^n,g)< \frac{1}{3(n-2)}
\end{equation} then we can choose $K=4\kato_T(M^n,g)$,  $N=n+4(n-2)^2\kato_T(M^n,g)$ and $C = 4\katoT$.
\end{thm}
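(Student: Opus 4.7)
The plan is to prescribe the conformal factor $f$ as an explicit functional of $\Ricm$ built from the heat semigroup, and then to verify the $\RCD$ condition through the formula for the Bakry--Émery Ricci tensor under the combined rescaling of both metric and measure. Concretely, I would set
\[
f(x) = \lambda \int_0^T (e^{t\Delta}\Ricm)(x)\,dt
\]
for a suitable constant $\lambda>0$; the choice $\lambda=4$ should match the sharp constants appearing in the improved version of the statement. The Kato hypothesis immediately gives $0\le f\le \lambda\katoT$, and standard parabolic regularity for heat potentials of Kato functions ensures $f\in\cC^2(M)$.

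The next step is to compute the $N$-Bakry--Émery Ricci tensor of the weighted manifold $(M,\tilde g, e^{2f}\nu_g)$ with $\tilde g=e^{2f}g$. Since $e^{2f}\nu_g = e^{-(n-2)f}\nu_{\tilde g}$, the Bakry--Émery weight is $\phi=(n-2)f$. A direct computation combining the classical conformal transformation formulas for the Ricci tensor and for the $\tilde g$-Hessian of $\phi$ produces a striking cancellation, reducing the weighted tensor to
\[
\Ric^{\tilde g} + (\nabla^{\tilde g})^2\phi - \tfrac{1}{N-n}d\phi\otimes d\phi \;=\; \Ric - (\Delta f)\, g - \Bigl((n-2)+\tfrac{(n-2)^2}{N-n}\Bigr)df\otimes df,
\]
where all the objects on the right-hand side are computed with respect to the \emph{original} metric $g$. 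This is the clean ``Bakry--Émery meets Kato'' identity underlying the paper's title.

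The heat equation satisfied by $u_t \df e^{t\Delta}\Ricm$ then gives $\Delta f = \lambda(u_T-\Ricm)$, and combining with the pointwise lower bound $\Ric\ge -\Ricm g$ yields
\[
\Ric-(\Delta f)\,g \;\ge\; (\lambda-1)\Ricm\, g - \lambda\, u_T\, g.
\]
Taking $\lambda>1$ produces a positive contribution $(\lambda-1)\Ricm g$ which must absorb the negative term $-A\,df\otimes df$ with $A=(n-2)+\tfrac{(n-2)^2}{N-n}$. This is the technical heart of the argument and the main obstacle I anticipate: one needs a pointwise gradient estimate of the form $|\nabla f|^2\lesssim \katoT\cdot \Ricm + \katoT/T$ together with a pointwise smoothing estimate $u_T\lesssim \katoT/T$, both derived solely from the Kato hypothesis---presumably via Bochner/Li--Yau-type arguments adapted to the Kato setting, building on the authors' earlier work \cite{CMT1}. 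The explicit threshold $\katoT<\tfrac{1}{3(n-2)}$ in the improved statement should reflect the precise constant in the gradient bound required for $(\lambda-1)\Ricm$ to dominate $A|\nabla f|^2$, the factor $3$ coming from the worst direction in $df\otimes df$ relative to $\Ricm$.

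Optimising the dimension parameter then yields the balance $N-n\sim (n-2)^2\katoT$ and $K\sim -\katoT$ stated in the theorem. Finally, since $(M,\tilde g, e^{2f}\nu_g)$ is a smooth weighted Riemannian manifold and completeness is preserved---$\tilde g$ and $g$ are bi-Lipschitz equivalent thanks to $0\le f\le C$---the Bakry--Émery curvature-dimension inequality $\BE(K/T,N)$ obtained at the smooth level upgrades to the synthetic $\RCD(K/T,N)$ condition via the Erbar--Kuwada--Sturm equivalence, concluding the proof.
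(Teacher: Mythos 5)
Your outline correctly identifies the structural core of the theorem: a conformal change $\tilde g=e^{2f}g$ with measure $e^{2f}\nu_g$ (so BE weight $\phi=(n-2)f$), the cancellation of Hessian terms in the $N$-Bakry--\'Emery tensor under this simultaneous rescaling, and the smooth-to-synthetic upgrade $\BE\Rightarrow\RCD$. Your identity
\[
\Ric^{\tilde g}+(\nabla^{\tilde g})^2\phi-\tfrac{1}{N-n}\,d\phi\otimes d\phi
=\Ric-(\Delta f)\,g-\Bigl((n-2)+\tfrac{(n-2)^2}{N-n}\Bigr)\,df\otimes df
\]
is indeed, after translating conventions, the same computation as the paper's Lemma~\ref{eq:BEHS}, with $c(n,q)=(n-2)+\tfrac{(n-2)^2}{q}$. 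So the \emph{target} inequality is right.

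The genuine gap is in the choice of $f$. You prescribe the \emph{linear} potential $f=\lambda\int_0^T e^{t\Delta}\Ricm\,dt$ and then must absorb the negative term $-A\,|df|^2$ with $A=c(n,q)$ using the surplus $(\lambda-1)\Ricm$. This requires a pointwise bound of the form $|\nabla f|^2\lesssim \katoT\,\Ricm+\katoT/T$, which you flag as ``the main obstacle'' but treat as obtainable from Li--Yau-type estimates. It is not available here: (i) such a bound is not a consequence of the Dynkin/Kato hypothesis alone — $\Ricm$ is only locally integrable, and the Kato condition gives a time-averaged $L^\infty$ control on $e^{t\Delta}\Ricm$, not a pointwise gradient estimate against $\Ricm$; (ii) more seriously, on a complete noncompact manifold the Li--Yau machinery the paper cites (\cite{Carron:2016aa}) requires good cut-off functions, and in this paper the existence of those cut-offs (Proposition~\ref{prop:cutoff}) is deduced \emph{from} Theorem~\ref{Maintheo}, so invoking it here is circular. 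The estimate $u_T\lesssim\katoT/T$ has the same difficulty for complete manifolds.

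The paper sidesteps this entirely by taking $f=\tfrac{1}{\lambda}\log\varphi$, where $\varphi$ is a bounded positive gauge function solving (weakly, then $\cC^2$ by elliptic regularity) the Schr\"odinger inequality $\Delta_g\varphi-\lambda\Ricm\varphi\ge-2\beta\varphi$; the existence and two-sided bound $1\le\varphi\le2e^{\beta T}$ follow from the classical semigroup/Duhamel argument for potentials in the Dynkin class (Proposition~\ref{prop:jaugeKato}, Corollary~\ref{coro:jaugeKato}). The decisive point, missing from your plan, is the chain rule
\[
\Delta_g f=\frac{\Delta_g\varphi}{\lambda\varphi}+\lambda\,|df|_g^2,
\]
which, choosing $q$ so that $\lambda=c(n,q)$, makes the troublesome $-c(n,q)|df|^2$ term cancel \emph{identically} against the $\lambda|df|^2$ produced by differentiating the logarithm. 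No gradient estimate on $f$ is needed; one only needs the elliptic inequality for $\varphi$ and the crude two-sided bound on $\varphi$ to control $f$ in $L^\infty$ and hence the bi-Lipschitz comparability of $\tilde g$ with $g$. This is precisely what makes the proof go through for complete, noncompact manifolds and is the part your proposal does not reproduce. (As a side remark, your guess ``$\lambda=4$'' for the sharp constants does not match the paper's choices, e.g.\ $\lambda=(1-e^{-1})/\katoT$ in the improved case, but this is secondary to the structural issue above.)
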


In dimension 2, the previous result holds without restriction on the bound $\gamma$, and it provides a metric conformal and bi-Lipschitz to $g$ with curvature bounded below: this is a simple consequence of our Corollary \ref{coro:jaugeKato} and of the conformal transformation law for the Gauss curvature (see Remark \ref{rem:dim2}).

As a consequence of Theorem \ref{Maintheo}, we establish the existence of cut-off functions with controlled gradient and Laplacian, see Proposition \ref{prop:cutoff}. This implies that all the results proved in \cite{CMT1,CMT2} on closed Riemannian manifolds extend to complete ones, see Section \ref{consman}. 

Theorem \ref{Maintheo} also yields the following result for limit spaces.

\begin{crl}\label{crl:limit}
Let $(X,\dist,\mu,o)$ be the pointed measured Gromov--Hausdorff limit of a sequence of pointed complete weighted Riemannian manifolds $\{(M_\ell^n,g_\ell,c_\ell\nu_{g_\ell,}o_\ell)\}$ where $\{c_\ell\}\subset (0,+\infty)$.  Assume that there exist $T>0$ and $\gamma \in (0,1/(n-2))$ such that \begin{equation}\label{eq:uniform}\tag{UD}
\sup_\ell \kato_T(M_\ell^n,g_\ell)\le \gamma
\end{equation} holds. Then there exist a distance $\odist$ and a measure $\omu$ on $X$ such that $\dist \le \odist \le  C(n,\gamma)\, \dist$, $\mu \le \omu \le C(n,\gamma) \, \mu$, and the space  $(X,\odist,\omu)$ is $\mathrm{RCD}(-K/T,N)$ with $K$ and $N$ given by Theorem \ref{Maintheo}.
\end{crl}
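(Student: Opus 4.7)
The strategy is to apply \tref{Maintheo} to each manifold in the sequence, then pass the construction to the pmGH limit.

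First, by \tref{Maintheo}, for each $\ell$ there exists $f_\ell \in \cC^2(M_\ell)$ with $0 \le f_\ell \le C=C(n,\gamma)$ such that the weighted Riemannian manifold $(M_\ell, e^{2f_\ell} g_\ell, e^{2f_\ell} \nu_{g_\ell})$ satisfies the $\RCD(K/T, N)$ condition with constants $K \le 0$ and $N > n$ depending only on $n$ and $\gamma$. The $\RCD$ condition being invariant under multiplication of the reference measure by a positive constant, $(M_\ell, e^{2f_\ell} g_\ell, c_\ell e^{2f_\ell} \nu_{g_\ell})$ is also $\RCD(K/T, N)$. Denoting by $\dist_\ell$ and $\dist'_\ell$ the Riemannian distances of $g_\ell$ and $e^{2f_\ell} g_\ell$ respectively, the bound $0 \le f_\ell \le C$ yields
\[
\dist_\ell \le \dist'_\ell \le e^C \dist_\ell \quad \text{and} \quad c_\ell \nu_{g_\ell} \le c_\ell e^{2f_\ell} \nu_{g_\ell} \le e^{2C} c_\ell \nu_{g_\ell},
\]
uniformly in $\ell$.

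Second, I would extract a pmGH limit of the perturbed sequence. The uniform bi-Lipschitz bound on the identity maps ensures that $(M_\ell, \dist'_\ell, c_\ell e^{2f_\ell}\nu_{g_\ell}, o_\ell)$ inherits a uniform local doubling condition from the original sequence, so Gromov's precompactness theorem produces, up to a subsequence, a pmGH limit $(Y, \odist_Y, \omu_Y, o_Y)$. To identify $Y$ with $X$, I would use the identity maps $\id_\ell \colon M_\ell \to M_\ell$: composing them with the $\eps_\ell$-isometries of the two converging sequences yields a family of uniformly bi-Lipschitz maps whose limit (extracted via Arzelà--Ascoli, thanks to the uniform bi-Lipschitz control) is a bi-Lipschitz homeomorphism $\Phi \colon X \to Y$ with constants $1$ and $e^C$. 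Pulling back $\odist_Y$ and $\omu_Y$ to $X$ via $\Phi$ defines $\odist$ and $\omu$, and the bi-Lipschitz and measure-equivalence inequalities pass from the $\id_\ell$ to the limit.

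Third, the stability of the $\RCD(K/T, N)$ condition under pmGH convergence implies that $(X, \odist, \omu)$ satisfies $\RCD(K/T, N)$ with the constants provided by \tref{Maintheo}.

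The main obstacle is the second step: rigorously passing to the limit of the identity maps and identifying $Y$ with $X$ via a bi-Lipschitz homeomorphism. In practice this requires either invoking a general stability result for pmGH convergence under uniformly bi-Lipschitz and uniformly measure-equivalent perturbations on the same underlying spaces, or carefully constructing compatible $\eps$-isometries for the two metric measure structures on each $M_\ell$ and verifying that their limits differ only by a bi-Lipschitz bicontinuous identification.
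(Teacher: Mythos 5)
Your proposal is correct and takes essentially the same route as the paper, which gives no separate proof of this corollary but carries out the argument inside the proof of Proposition~\ref{prop:Dynlinreg}: apply \tref{Maintheo} to each $M_\ell$, observe that the time-changed metric and measure are uniformly bi-Lipschitz and measure-comparable to the originals since $0\le f_\ell\le C(n,\gamma)$, extract a pmGH sublimit of the perturbed sequence, and conclude by stability of the $\RCD$ condition. Two minor observations: the paper obtains precompactness by invoking compactness of the class of pointed $\RCD(K/T,N)$ spaces directly, rather than by transporting a doubling bound from the original sequence as you suggest (both work, but the former is cleaner); and the paper writes the limit of the perturbed sequence directly as $(X,\bar\dist,\bar\mu,o)$ on the same underlying set, leaving implicit the bi-Lipschitz identification of limit sets that you correctly make explicit via limits of the identity maps along the GH approximations.
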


It is worth pointing out that \eqref{eq:uniform} implies the existence of limit points in the pointed measured Gromov--Hausdorff topology: this is a consequence of the local doubling condition obtained in Proposition \ref{prop:DP} and Gromov's compactness theorem \cite[Proposition 5.2]{Gromov}.  Moreover,  by Theorem \ref{Maintheo},  for any $\ell$ there exist a Riemannian metric $\bar{g}_\ell$ and a Borel measure $\bar{\mu}_\ell$ on $M_\ell$ such that $(M_\ell,\bar{g}_\ell,\bar{\mu}_\ell)$ is an $\RCD(-K/T,N)$ space bi-Lipschitz equivalent to  $(M_\ell,g_\ell)$, with bi-Lipschitz bounds independent of $\ell$. As well-known, the class of $\RCD(-K/T,N)$ metric measure spaces is compact for the pointed measured Gromov--Hausdorff topology (by compactness of the larger $\CD(-K/T,N)$ class \cite[Theorem 29.25]{Villani_2009} and stability of the $\RCD(-K/T,\infty)$ condition \cite[Theorem 7.2]{GMS}). Thus $(M_\ell,\bar{g}_\ell,\bar{\mu}_\ell)$ subconverges to an $\RCD(-K/T,N)$ space bi-Lipschitz equivalent to $(X,\dist,\mu)$.

Note that when $n=2$,  the space $(X,\odist)$ is an Alexandrov space with curvature bounded below.

The previous corollary puts us in a position to apply well-known results of the $\RCD$ theory \cite{DPMR, KellMondino,MondinoNaber,Bru__2020,GigliPas} to conclude that $X$ is a rectifiable metric measure space with a constant essential dimension,  see Propositions \ref{prop:Dynlinreg} and \ref{prop:Katoreg}.  This is a significant improvement over the rectifiability result that we proved in \cite[Theorem 4.4]{CMT2}. Corollary \ref{crl:limit} also yields that the singularities of $X$ are no more complicated than those of the boundary elements of the class of smooth $\RCD(K/T,N)$ spaces.

Another important consequence of Theorem \ref{Maintheo} is an almost monotonicity formula for the volume ratio
\[
\cV(x,r) \df \frac{\nu_g(B_{r}(x))}{\omega_n r^n} \, \cdot
\]
Here $\omega_n$ is the Lebesgue measure of the unit Euclidean ball in $\mathbb{R}^n$.  To state that formula,  let us consider a non-decreasing function  $f : (0,T] \to \R_+$ such that
\begin{equation}\label{eq:newstrong}\tag{SK}
f(T) \le \frac{1}{3(n-2)} \qquad \text{and} \qquad \int_0^T \frac{f(s)}{s} \di s <\infty.
\end{equation}
The second bound should be understood as a control on how fast $f$ converges to $0$ in $0$, as it implies that $\displaystyle \lim_{t\to 0+} f(t)=0$.  For any $\tau \in(0, T]$ we set \[ \Phi(\tau) \df  \int_0^{\sqrt{\tau}} \frac{f(s)}{s} \di s.\]

\begin{thm}\label{th:AlmostBishopGromov} Let $(M^n,g)$ be a complete Riemannian manifold such that for any $t \in (0,T]$,
\begin{equation}\label{eq:bound}\tag{K}
\kato_t(M^n,g)\le f(t).
\end{equation} Then for any $x\in M$,  $R \in (0, \sqrt{T}]$,  $\eta \in (0,1-1/\sqrt{2})$ and $r \le (1-\eta)R$,
\[
\cV(x,R)  \exp \left(-\frac{C(n)\Phi(R)}{\eta} \right) \le \cV(x,r) \exp\left(-\frac{C(n)\Phi(r)}{\eta}\right).
\]
\end{thm}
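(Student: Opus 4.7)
The plan is to iterate a one-step almost-monotonicity of the volume ratio $\cV$ derived from \tref{Maintheo} applied at an appropriate scale $s$, then to bundle the resulting scale-dependent errors into a Riemann-sum bound by $\Phi$. Fix $s \in (0, \sqrt{T}]$ and apply \tref{Maintheo} with $T$ replaced by $s^2$: the hypothesis \eqref{eq:NewDynkin2} is satisfied since $\kato_{s^2}(M^n,g) \le f(s^2) \le 1/(3(n-2))$, yielding $\psi_s \in \cC^2(M)$ with $0 \le \psi_s \le 4\kato_{s^2}(M^n,g)$ such that $(M, e^{2\psi_s}g, e^{2\psi_s}\nu_g)$ satisfies $\RCD(-4\kato_{s^2}/s^2,\, n+4(n-2)^2\kato_{s^2})$. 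The boundedness of $\psi_s$ yields a bi-Lipschitz control of the $g$-distance by the conformal distance $\tilde d_s$, and an equivalence of $\nu_g$ with $\omu_s := e^{2\psi_s}\nu_g$, all with multiplicative constants $\exp(O(\kato_{s^2}))$. Combining these with the Bishop-Gromov inequality on the $\RCD$ space gives
\[\cV(x, s') \le \cV(x, s)\,\exp\bigl(C(n)\,\kato_{s^2}(M^n,g)\bigr) \qquad \text{for every } s' \in [s, \sqrt{2}\, s].\]
The $\exp(C\kato_{s^2})$ factor absorbs three $O(\kato_{s^2})$ contributions: the distance distortion, the measure weight, and the $\RCD$ model-volume correction $V_{-4\kato_{s^2}/s^2,\, N_s}(s')(s/s')^n/V_{-4\kato_{s^2}/s^2,\, N_s}(s)$. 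The last is $\exp(O(\kato_{s^2}))$ because both $-K_s$ and $N_s - n$ are linear in $\kato_{s^2}$ and the radius ratio $s'/s$ stays bounded by $\sqrt{2}$ — which is precisely where the constraint $\eta < 1 - 1/\sqrt{2}$ (equivalent to $1/(1-\eta) \le \sqrt{2}$) enters.

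To iterate, fix $r \le (1-\eta) R \le R \le \sqrt{T}$, let $K := \lceil \log(R/r)/\log(1/(1-\eta)) \rceil$, and set $s_k := r (R/r)^{k/K}$ for $k = 0, \ldots, K$. Then $s_0 = r$, $s_K = R$, and $\rho := s_{k+1}/s_k = (R/r)^{1/K} \le 1/(1-\eta) \le \sqrt{2}$. Chaining the one-step estimate at scales $s_0, \ldots, s_{K-1}$ yields
\[\cV(x, R) \le \cV(x, r)\,\exp\!\Bigl(C(n)\sum_{k=0}^{K-1} \kato_{s_k^2}(M^n,g)\Bigr).\]
Since $f$ is non-decreasing and $\kato_{s_k^2} \le f(s_k^2)$, the inequality $f(s_k^2)\log\rho \le \int_{s_k}^{s_{k+1}} f(s^2)/s\, ds$ telescopes to $\sum_{k=0}^{K-1} \kato_{s_k^2} \le (\Phi(R) - \Phi(r))/\log\rho$. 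Since $r \le (1-\eta) R$ implies $\log(R/r) \ge \log(1/(1-\eta))$, so $K \le 2 \log(R/r)/\log(1/(1-\eta))$, we have $\log\rho \ge \log(1/(1-\eta))/2 \ge \eta/2$; this yields $\sum_{k=0}^{K-1} \kato_{s_k^2} \le 2(\Phi(R) - \Phi(r))/\eta$, and the stated inequality follows.

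The main obstacle is the one-step estimate: extracting a multiplicative error linear (rather than merely polynomial) in $\kato_{s^2}$ requires a careful quantitative expansion of the $\RCD$ model volume $V_{K_s/s^2,\, N_s}$ at nearby radii as both $-K_s$ and $N_s - n$ vanish like $\kato_{s^2}$.
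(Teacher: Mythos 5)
Your proposal is correct and follows essentially the same route as the paper: apply Theorem \ref{Maintheo} at scale $s^2$ to get a conformally changed $\RCD$ space with curvature $-O(\kato_{s^2})/s^2$ and dimension $n+O(\kato_{s^2})$, transfer the Bishop--Gromov inequality back through the $e^{O(\kato_{s^2})}$ bi-Lipschitz and measure distortion to get a one-step bound $\cV(x,s')\le \cV(x,s)e^{C(n)\kato_{s^2}}$ for bounded radius ratios, iterate over a geometric sequence of scales with ratio at most $1/(1-\eta)<\sqrt 2$, and convert the resulting sum into $\Phi(R)-\Phi(r)$ by monotonicity of $f$. The only differences from the paper's proof of Proposition \ref{prop:towardsThC} are bookkeeping choices (equally log-spaced scales $s_k=r(R/r)^{k/K}$ versus $r_k=(1-\eta)^{-k}r$ with a final adjustment step, and where the Riemann-sum comparison is inserted), and the quantitative model-volume expansion you flag as the remaining obstacle is exactly the paper's estimate \eqref{eq:BG}.
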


As a corollary,  we obtain the following Hölder regularity result. We denote  by $\mathcal{H}^n$ the $n$-dimensional Hausdorff measure of a metric space. 

\begin{crl}\label{th:reifenberg} Let $(X,\dist,o)$ be the pointed Gromov--Hausdorff limit of a non-collapsed sequence of pointed complete Riemannian manifolds $\{(M_i^n,g_i,o_i)\}$ satisfying \eqref{eq:bound}.  Then the volume density
\[
\uptheta_X(x)\df\lim_{r\to 0+} \frac{\cH^n\left(B_r(x)\right)}{\omega_n r^n}
\]
is well-defined at any $x \in X$, and for any $\alpha\in (0,1)$ there exists $\delta=\delta(n,\alpha,f)>0$ such that the set $\{ x \in X \, : \,  \uptheta_X(x) \ge 1- \delta \}$ is contained in an open $\cC^{\alpha}$ manifold.
\end{crl}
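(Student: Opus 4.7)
My plan is to combine the almost-monotonicity formula of Theorem~\ref{th:AlmostBishopGromov} with the RCD structure supplied by Corollary~\ref{crl:limit}, and then invoke a Reifenberg-type theorem à la Cheeger-Colding (as adapted to the non-collapsed RCD setting by Kapovitch-Mondino and De Philippis-Gigli).

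For the existence of $\uptheta_X(x)$, fix $x\in X$ and $x_i\to x$ with $x_i\in M_i$. The volume doubling that follows from a uniform Kato bound (\cite{CMT1}) together with the non-collapsing assumption yields $\cH^n(B_r(x))=\lim_i \nu_{g_i}(B_r(x_i))$ at continuity radii, so applying Theorem~\ref{th:AlmostBishopGromov} to each $(M_i^n,g_i)$ at $x_i$ and passing to the limit produces, for $r\le(1-\eta)R\le\sqrt{T}$,
\[
\cV_X(x,R)\,e^{-C(n)\Phi(R)/\eta}\le \cV_X(x,r)\,e^{-C(n)\Phi(r)/\eta},
\]
with $\cV_X(x,r):=\cH^n(B_r(x))/(\omega_n r^n)$. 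Setting $h_\eta(r):=\cV_X(x,r)e^{-C(n)\Phi(r)/\eta}$, the $\eta$-monotonicity makes $k\mapsto h_\eta((1-\eta)^k R)$ bounded and non-decreasing; for $r\in[(1-\eta)^{k+1}R,(1-\eta)^k R]$ one further sees $h_\eta((1-\eta)^{k-1}R)\le h_\eta(r)\le h_\eta((1-\eta)^{k+2}R)$, so $h_\eta(r)$ admits a limit as $r\to 0$. Since $\Phi(r)\to 0$ by the integrability condition in \eqref{eq:newstrong}, this gives existence of $\uptheta_X(x)=\lim_{r\to 0}\cV_X(x,r)$; letting $r\to 0$ in the displayed inequality also yields the useful upper bound $\cV_X(x,R)\le \uptheta_X(x)\,e^{C(n)\Phi(R)/\eta}$ for every $R\le\sqrt{T}$.

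For the Hölder regularity, I would apply the Cheeger-Colding Reifenberg theorem in its non-collapsed RCD version: for each $\alpha\in(0,1)$ there is $\eps=\eps(n,\alpha)>0$ such that, in an $\RCD(K,N)$ space, if every ball $B_s(y)\subset B_r(x)$ is $\eps s$-GH-close to $B_s^{\R^n}(0)$, then $B_{r/2}(x)$ lies in a $C^\alpha$ manifold chart. The hypothesis is supplied by the following chain: at a point $y$ with $\uptheta_X(y)\ge 1-\delta$, the above upper bound pinches $\cV_X(y,s)$ within $(1-\delta-o(1),1+o(1))$ at all scales $s\le r_0(f,\delta)$; the Cheeger-Colding almost-rigidity theorem (volume ratio close to $1$ implies ball GH-close to Euclidean, valid in non-collapsed RCD spaces and transferred to $(X,\dist,\cH^n)$ via Corollary~\ref{crl:limit}) then yields the required Reifenberg flatness at every scale $s\le r_0$. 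Reifenberg therefore provides a $C^\alpha$ chart on $B_{r_0/2}(y)$, and the union of such charts over all $y$ with $\uptheta_X(y)\ge 1-\delta$ is the desired open $C^\alpha$ manifold.

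The main obstacle is to transport the almost-Euclidean condition between the frame $(X,\dist,\cH^n)$ where Theorem~\ref{th:AlmostBishopGromov} is stated and the frame $(X,\odist,\omu)$ where the RCD machinery of Corollary~\ref{crl:limit} is available. The bi-Lipschitz and bi-measure comparabilities $\dist\le\odist\le C\dist$ and $\mu\le\omu\le C\mu$ make the transfer possible, but the bookkeeping needed to ensure that Reifenberg flatness in one frame passes to Reifenberg flatness in the other at nearby scales—and that the resulting constants depend only on $n,\alpha,f$—is the delicate step.
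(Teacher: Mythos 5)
Your architecture is the same as the paper's (volume convergence to identify $\cH^n$ as the limit measure, \tref{th:AlmostBishopGromov} passed to the limit for the existence of $\uptheta_X$, then volume pinching plus a Reifenberg-type theorem for the $\cC^\alpha$ charts), and the first half is essentially right. Two glosses there: the identification $\cH^n(B_r(x))=\lim_i\nu_{g_i}(B_r(x_i))$ is not a consequence of doubling plus non-collapsing — it is Colding's volume convergence theorem, whose adaptation to the Kato setting (requiring the cut-off functions of \pref{prop:cutoff}) is what the paper invokes; and the boundedness of your $h_\eta$ near $0$ needs the manifold-level bound $\cV\le e^{C(n)\Phi/\eta}$ (which is also the source of $\uptheta_X\le 1$).

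The second half has two genuine gaps. First, the claimed pinching $\cV_X(y,s)\in(1-\delta-o(1),1+o(1))$ for all $s\le r_0(f,\delta)$ with a \emph{uniform} $r_0$ is false: almost monotonicity only gives the \emph{upper} bound $\cV_X(y,s)\le\uptheta_X(y)\,e^{C(n)\Phi(s)/\eta}$ from the density, while a lower bound must be propagated \emph{downwards} from a scale $R_y$ at which $\cV_X(y,R_y)\ge 1-2\delta$ already holds, and that scale depends on $y$. This is repairable (the corollary only asks for containment in an open manifold, so point-dependent scales suffice), but you must then also verify the Reifenberg hypothesis at all points $z$ near $y$, not only at $y$. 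Second and more seriously, the step you yourself leave open as ``the delicate step'' is the missing idea, and it cannot be closed by the fixed bi-Lipschitz comparison of \cref{crl:limit}: a distortion by a constant $C(n,\gamma)>1$ that is not close to $1$ destroys $\eps s$-closeness to Euclidean balls, so almost rigidity in the frame $(X,\odist,\omu)$ does not transfer back to $(X,\dist,\cH^n)$ at a fixed scale. The paper's mechanism (its Reifenberg theorem for balls of almost maximal volume in Section 5) is a blow-up/contradiction argument: at scale $r$ one applies \tref{Maintheo} with $T=r^2$, so the conformal factor is controlled by $\kato_{r^2}\le f(r^2)\to 0$ and the parameters $(K,N)$ tend to $(0,n)$; blow-up limits of putative counterexamples are therefore genuine non-collapsed $\RCD(0,n)$ spaces $(Z,\dist_Z,\cH^n)$ with maximal volume ratio, hence isometric to $\R^n$ by the De Philippis--Gigli rigidity \cite[Theorem 1.1]{DPG}. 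This scale-dependent improvement of the conformal factor — the very place where the integrability $\int_0^T f(s)s^{-1}\di s<\infty$ enters — is what makes the transfer work, and without it your argument does not close.
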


Here the sequence $\{(M_i^n,g_i,o_i)\}$ is non-collapsed if the numbers $\{\nu_{g_i}(B(o_i,\sqrt{T}))\}$ admit a positive lower bound.  Thanks to \eqref{eq:bound},  this sequence satisfies \eqref{eq:uniform} for a possibly smaller value of $T$, because $f(t) \to 0$ as $t \to 0$, thus it necessarily admits pointed measured Gromov--Hausdorff limit points.

Compare to \cite[Corollary 5.20]{CMT2} and note that the integral condition in  \eqref{eq:newstrong} is weaker than the strong Kato condition considered in that paper: see Remark \ref{rem:newstrong} for the details. \\

To establish Theorem \ref{Maintheo}, we adapt the proof of a classical result on Schrödinger operators to show that assumption \eqref{eq:NewDynkin} ensures the existence of  a suitable gauge function $\varphi\in \cC^2(M)$ that satisfies
\[
\Delta_g \varphi-\lambda\Ricm  \varphi\ge -2\beta \varphi
\]
for carefully chosen parameters $\lambda,\beta>0$ depending on $n$ and $\gamma$ only.  We then set
\[
h \df \frac{1}{\lambda} \log \varphi.
\]
With this choice of conformal factor,  the transformation rule under time change (\cite{SturmJFA2018,SturmGAFA2020,HanSturm}, see also Lemma \ref{eq:BEHS}) yields that $(M,  e^{2h}g , e^{2h} \nu_g )$ satisfies a suitable version of the Bochner inequality, namely the Bakry-Émery condition $\mathrm{BE}(K/T,N)$, for $K$ and $N$ depending on $n$ and $\gamma$ only. The conclusion follows since $\mathrm{BE}(K/T,N)$ is equivalent to $\RCD(K/T,N)$ in the setting of weighted Riemannian manifolds, see e.g.~\cite[Theorem 4.9]{sturm2006I} or \cite[Theorem 0.12]{LottVillani}.  The idea of using this transformation was inspired by \cite{CMR} where such a conformal change was made to prove a rigidity result for minimal hypersurfaces in $\R^4$ (see \cite{ChodoshLi} for another proof of this result). This idea was implicity present already in \cite{Elbert_2007}.

The paper is organised as follows.  In Section \ref{TC}, we recall the Bakry-\'Emery condition, time changes, and the aforementioned transformation rule.  Section \ref{pfA} is devoted to proving Theorem \ref{Maintheo}. We give consequences of this theorem for complete Riemannian manifolds in Section \ref{consman} and for limit spaces in Section \ref{conslim}.

\hfill

\noindent \textbf{Acknowledgments:}
The authors are partially supported by the ANR grant ANR-17-CE40-0034: CCEM. The first author is also partially supported by the ANR grant ANR-18-CE40-0012: RAGE.  The third author has been supported by Laboratoire de Mathématiques Jean Leray via the project Centre Henri Lebesgue ANR-11-LABX-0020-01,  by Fédération de Recherche Mathématiques de Pays de Loire via the project Ambition Lebesgue Loire,  and by the Research Foundation – Flanders (FWO) via the Odysseus II programme no.~G0DBZ23N. The authors are also grateful to Mathias Braun, Chiara Rigoni and Christian Rose for helpful comments on a preliminary version of the paper. Lastly, the authors express their gratitude to the anonymous reviewer for constructive suggestions.

\section{The Bakry-\'Emery condition under time change}\label{TC}

Let $(M^n,g)$ be a complete Riemannian manifold of dimension $n \ge 2$.   We write $\Delta_g$ for the non-negative Laplace--Beltrami operator of $(M^n,g)$ defined by
\begin{equation}\label{eq:Green}
 \int_M \langle d\varphi,d\phi\rangle_g\,\di \nu_g=\int_M\varphi\Delta_g\phi\,\di \nu_g
 \end{equation}
for all $\varphi,\phi\in \cC^\infty_c(M)$. We will also use $\Delta_g$ to denote the unique self-adjoint extension of the Laplace--Beltrami operator, which maps $\mathcal{C}_0^\infty(M)$ to $L^2(M, \nu_g)$.  The heat kernel $H$ of $(M^n,g)$ is the kernel of its heat semigroup $(e^{-t\Delta_g})_{t>0}$; in particular,  for any $\phi\in \cC^\infty_c(M)$ and $x \in M$, 
 $$\left(e^{-t\Delta_g}\phi\right)(x)=\int_M H(t,x,y)\phi(y)\di \nu_g(y).$$

We will say that $(M^n,g, \bar\nu)$ is a weighted Riemannian manifold if $\bar\nu$ is a measure absolutely continuous with respect to $\nu_g$ with positive $\mathcal{C}^2$ Radon--Nikodym density. Such a space admits a weighted Laplacian $L$ defined through the Green formula obtained upon replacing $\Delta_g$ by $L$ and $\nu_g$ by $\bar\nu$ in \eqref{eq:Green}.

For any Borel set $A\subset M$, we will write $\un_A$ for the characteristic function of $A$ and $\un$ for the constant function equal to $1$, that is, $\un=\un_M$.  We will denote the spectrum by $\mathrm{spec}$.

If $\mu,\bar{\mu}$ are two Borel measures  on a metric space and $C>0$, we shall write
\[
\mu \le C \bar{\mu}
\]
to denote that $\mu$ is absolutely continuous with respect to $\bar \mu$ with Radon--Nikodym derivative lower than or equal to $C$ $\bar{\mu}$-almost everywhere.

We will write $\mathbb{B}^n_r$ for the Euclidean ball of radius $r$ centered at the origin of $\mathbb{R}^n$.  Lastly,  we will use $C(a_1,\ldots,a_\ell)$ to denote a generic constant depending solely on parameters $a_1,\ldots,a_\ell$ and whose value may change from one line to another.

\subsection{The Bakry-\'Emery condition}\label{sub:BakryEmery}
The Bochner formula for $(M^n,g)$ states that for all $u\in \cC^\infty(M)$,
\begin{equation}\label{eq:Boch}
\la d\Delta_gu,du\ra_g-\frac12 \Delta_g |du|^2_g=\left|\nabla^g du\right|_g^2 +\Ric(du,du).\end{equation}
Introducing the function $\Ricm : M \to \mathbb{R}_+$ defined by
\begin{equation}
\Ricm(x)=\begin{cases}
0 & \text{if } \Ric_x \ge 0,\\
-\min\,{\rm spec}\, (\Ric_x) & \text{otherwise},
\end{cases}
\end{equation}
this yields the so-called Bochner inequality:
\begin{equation}\label{eq:BEg}
\la d\Delta_gu,du\ra_g-\frac12 \Delta_g |du|^2_g\ge \frac{\left(\Delta_g u\right)^2}{n}-\Ricm |du|^2_g . \end{equation}
The Bakry-\'Emery condition is the analogue of \eqref{eq:BEg} for weighted Riemannian manifolds.

\begin{defi}\label{def:BEL} For $K \in \mathbb{R}$ and $N \in [1,+\infty]$, we say that a weighted Riemannian manifold $(M^n,g,\bar\nu)$ with associated weighted Laplacian $L$ satisfies the Bakry-\'Emery condition $\BE(K,N)$ if for any $u\in \cC^\infty(M)$,
\begin{equation}\label{eq:BEL}
\la dLu,du\ra_{g}-\frac12 L \left(|du|^2_{g}\right)\ge \frac{\left(L u\right)^2}{N}+K\,|du|^2_{g}.\end{equation}
\end{defi}

Introduced in the setting of Dirichlet forms in \cite{BakryEmery}, this condition was the first milestone towards the definition and the study of metric measure spaces with a synthetic notion of Ricci curvature bounded from below by $K$ and dimension bounded above by $N$. 

\subsection{Time changes}
We refer to \cite{ChenFukushima} for a nice introduction to time changes in the general setting of symmetric Markov processes. Here we focus on the case of the Brownian motion on $(M^n,g)$ where a time change is obtained by setting $\bar{g}\df e^{2h}g$ and $\bar{\nu} \df e^{2h} \nu_g$ for some $h\in \cC^2(M)$.  Then the operator $$L:=e^{-2h}\Delta_g$$ is associated with the Dirichlet energy
\[
\cE :  L^2(M,\bar{\nu}) \ni u \mapsto \int_M |d u|^2_g\di \nu_g \in [0,+\infty],
\]
in the sense that, for any $u \in L^2(M,\bar{\nu})$,
\[
\cE(u) = \int_M (Lu)u \di \bar{\nu}.
\]
The operator $L$ is also the weighted Laplacian of the weighted Riemannian manifold $(M^n,\bg, \bar\nu)$.  Indeed, for any $u\in \cC^\infty(M)$,
$$|du|^2_{\bg}=e^{-2h}|du|^2_g$$ thus
$$\int_M |du|^2_{\bg}\ \di \bar{\nu}=\cE(u).$$

The terminology ``time change'' comes from the fact that the Brownian motion on the  weighted Riemannian manifold $(M^n,\bg, \bar\nu)$ is obtained from the 
Brownian motion of $(M^n,g)$ only by a shift in time, see for instance \cite[Remark 8.3]{SturmJFA2018}. 

\subsection{Transformation rule}

The next lemma provides the transformation rule for the Bakry-\'Emery condition under time change. This rule is valid in a much more general setting, see for instance \cite{SturmJFA2018,SturmGAFA2020} and \cite{HanSturm}.
For completeness and because our  notation is slightly different, we provide a detailed proof.

\begin{lemm}\label{eq:BEHS} Let $(M^n,g)$ be a complete Riemannian manifold and $h\in \cC^2(M)$.  Set $\bg \df e^{2h}g$,  $\bar{\nu} \df e^{2h} \nu_g$ and  $L \df e^{-2h}\Delta_g$.  Then for any $q\in (0,+\infty]$ and $u \in \cC^\infty(M)$,
\begin{equation*}
\la dLu,du\ra_{\bg}-\frac12 L |du|^2_{\bg} \,  \ge \,  \frac{\left(L u\right)^2}{n+q}+\left( -\Ricm+\Delta_gh-c(n,q)|dh|^2_{g}\,\right) e^{-2h}|du|^2_{\bg}
\end{equation*}
where $c(n,q) = \frac{(n-2)(n+q-2)}{q} \, \cdot $
\end{lemm}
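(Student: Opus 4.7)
The plan is to view $L$ as the weighted Laplacian of the weighted Riemannian manifold $(M,\bg,\bar\nu)$ and to invoke the classical weighted Bakry--\'Emery Bochner identity on that space, before unfolding the conformal change back to $g$-geometric quantities. Since $\nu_{\bg}=e^{nf}\nu_g$, the measure $\bar\nu=e^{(2-n)f}\nu_{\bg}$ corresponds to the Bakry--\'Emery weight $V=(n-2)f$; the conformal Laplacian formula $\Delta_{\bg}u=e^{-2f}[\Delta_g u-(n-2)\la df,du\ra_g]$ then yields $Lu=\Delta_{\bg}u+\la dV,du\ra_{\bg}$, confirming that $L$ is the weighted Laplacian of $(M,\bg,\bar\nu)$.

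The main steps are as follows. First, applying the Bochner identity \eqref{eq:Boch} on $(M,\bg)$ and expanding $L=\Delta_{\bg}+\la dV,\cdot\ra_{\bg}$ gives the weighted Bochner identity
\begin{equation*}
\la dLu,du\ra_{\bg}-\tfrac{1}{2}L|du|^2_{\bg}=|\nabla^{\bg}du|^2_{\bg}+\Ric_{\bg}(\nabla^{\bg}u,\nabla^{\bg}u)+\nabla^{\bg}dV(\nabla^{\bg}u,\nabla^{\bg}u).
\end{equation*}
Second, the trace bound $|\nabla^{\bg}du|^2_{\bg}\geq (\Delta_{\bg}u)^2/n$ combined with the elementary inequality $\tfrac{a^2}{n}+\tfrac{b^2}{q}\geq \tfrac{(a+b)^2}{n+q}$, applied to $a=\Delta_{\bg}u$ and $b=\la dV,du\ra_{\bg}$ (so that $a+b=Lu$), produces
\begin{equation*}
\la dLu,du\ra_{\bg}-\tfrac{1}{2}L|du|^2_{\bg}\geq \tfrac{(Lu)^2}{n+q}+\bigl(\Ric_{\bg}+\nabla^{\bg}dV-\tfrac{dV\otimes dV}{q}\bigr)(\nabla^{\bg}u,\nabla^{\bg}u).
\end{equation*}
Third, I would substitute the conformal transformation formulas
\begin{equation*}
\Ric_{\bg}=\Ric_g-(n-2)\nabla^g df+(n-2)\,df\otimes df+(\Delta_g f-(n-2)|df|^2_g)\,g,
\end{equation*}
\begin{equation*}
\nabla^{\bg}df=\nabla^g df-2\,df\otimes df+|df|^2_g\,g,
\end{equation*}
together with $V=(n-2)f$, $\nabla^{\bg}u=e^{-2f}\nabla^g u$ and $\la df,du\ra_{\bg}=e^{-2f}\la df,du\ra_g$, and verify that after cancellations the Bakry--\'Emery tensor contracts to
\begin{equation*}
\bigl(\Ric_{\bg}+\nabla^{\bg}dV-\tfrac{dV\otimes dV}{q}\bigr)(\nabla^{\bg}u,\nabla^{\bg}u)=e^{-4f}\bigl[\Ric_g(\nabla^g u,\nabla^g u)+\Delta_g f\,|du|^2_g-c(n,q)\la df,du\ra_g^2\bigr].
\end{equation*}
Finally, I would conclude using $\Ric_g(\nabla^g u,\nabla^g u)\geq -\Ricm\,|du|^2_g$, $\la df,du\ra_g^2\leq |df|^2_g|du|^2_g$, and $e^{-4f}|du|^2_g=e^{-2f}|du|^2_{\bg}$. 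The limit case $q=+\infty$, where $c(n,+\infty)=n-2$ and the $(Lu)^2/(n+q)$ term drops out, follows from the same argument.

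The main obstacle is the third step: each of the two conformal change formulas contributes several tensorial pieces, and one must verify that, once they are fully contracted with $\nabla^{\bg}u$ (which carries the factor $e^{-4f}$ from raising indices with $\bg$), the $\nabla^g df$-terms cancel, the diagonal $|df|^2_g|du|^2_g$-terms cancel, and the coefficient of $\la df,du\ra_g^2$ collapses precisely to $-(n-2)-(n-2)^2/q=-c(n,q)$. This is a short but error-prone algebraic check, and it is exactly where the specific constant $c(n,q)$ in the statement emerges.
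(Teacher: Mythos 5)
Your argument is correct, and it reaches the stated inequality by a genuinely different route from the paper. The paper never passes through the weighted geometry of $(M,\bg,\bar\nu)$: it expands $\la dLu,du\ra_{\bg}$ and $L|du|^2_{\bg}$ directly in $g$-quantities via the product and chain rules, invokes the unweighted Bochner formula \eqref{eq:Boch} for $g$, and then controls the error terms by introducing the traceless parts $A$ of $\nabla^g du$ and $\mathring{B}$ of $df\otimes du+du\otimes df$, using $|A|^2-2\la A,\mathring{B}\ra\ge-|\mathring{B}|^2$ together with a Young inequality on the cross term $\la du,df\ra_g\,\Delta_g u$ to produce the dimensional constant $\tfrac{1}{n+q}$ and the coefficient $c(n,q)$. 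You instead identify $L$ as the $V$-weighted Laplacian of $(M,\bg)$ with $V=(n-2)f$ (consistent with the paper's own observation in Section 2.2 that $L$ is the weighted Laplacian of $(M,\bg,\bar\nu)$), apply the standard $N$-Bakry--\'Emery scheme --- weighted Bochner identity, $|\nabla^{\bg}du|^2_{\bg}\ge(\Delta_{\bg}u)^2/n$, and $\tfrac{a^2}{n}+\tfrac{b^2}{q}\ge\tfrac{(a+b)^2}{n+q}$ --- and only then convert the resulting tensor $\Ric_{\bg}+\nabla^{\bg}dV-\tfrac{1}{q}dV\otimes dV$ back to $g$-quantities via the conformal transformation laws for $\Ric$ and the Hessian. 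I have checked your third step: the Hessian terms cancel ($-(n-2)\nabla^g df$ from $\Ric_{\bg}$ against $+(n-2)\nabla^g df$ from $\nabla^{\bg}dV$), the $|df|^2_g\,g$ terms cancel, and the $df\otimes df$ coefficient is $(n-2)-2(n-2)-\tfrac{(n-2)^2}{q}=-(n-2)-\tfrac{(n-2)^2}{q}=-c(n,q)$, as you predicted. Your approach buys conceptual transparency (the constant $c(n,q)$ is revealed as the $N$-Bakry--\'Emery correction $\tfrac1q dV\otimes dV$ plus the conformal $df\otimes df$ terms, with $N=n+q$) at the cost of importing the conformal change formulas for $\Ric_{\bg}$ and $\nabla^{\bg}df$ as external inputs; the paper's computation is longer but self-contained, resting only on \eqref{eq:Boch} and elementary calculus rules. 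The $q=+\infty$ case is handled correctly in both treatments.
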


\begin{proof}[Proof of \lref{eq:BEHS}.] Recall the calculus rules:
\begin{equation}
\Delta_g(\varphi \phi) = \varphi \Delta_g \phi + \phi \Delta_g \varphi  - 2 \langle d \varphi, d \phi \rangle,
\end{equation}
\begin{equation}\label{eq:chain}
\Delta_g( \chi \circ \varphi) = (\chi' \circ \varphi) \Delta_g \varphi - (\chi'' \circ \varphi) |d \varphi|^2.
\end{equation}
Using them, we easily compute that 
$$\la dLu,du\ra_{\bg}=e^{-4h}\la d\Delta_gu,du\ra_g-2e^{-4h}\la dh,du\ra_g\,\Delta_gu \, ,$$
$$ L |du|^2_{\bg}=e^{-4h} \Delta_g |du|^2_g-2e^{-4h}|du|^2_g \Delta_g h-4e^{-4h}|du|^2_g|dh|^2_g+8e^{-4h}\nabla^g du(du,dh)\, , $$
so that
\begin{align*}
e^{4h}\left(\la dLu,du\ra_{\bg}-\frac12 L |du|^2_{\bg}\right) & =\la d\Delta_gu,du\ra_g-\frac12 \Delta_g |du|^2_g+\Delta_g h \,|du|^2_g \\
& +2|dh|^2_g|du|^2_g -4\nabla^g du(du,dh)-2\la dh,du\ra_g\Delta_gu.
\end{align*}
Let
$$A \df \nabla^g du+\frac{\Delta_g u}{n} g$$ 
be the traceless part of $\nabla^g du$. We introduce the tensor
$$B\df\left( dh\otimes du+du\otimes dh\right)$$
whose traceless part is
\[
\mathring{B} = B - \frac{\tr_g B}{n} g = B - 2\frac{\la du,dh\ra_g}{n} g.
\]
Using the Bochner formula \eqref{eq:Boch},  we get that
\begin{align*}
e^{4h}\left(\la dLu,du\ra_{\bg}-\frac12 L |du|^2_{\bg}\right)& =|A|_g^2-2\la A,\mathring{B}\ra_g\\
& + \Ric(du,du)+(\Delta_g h) |du|^2_g\\
& +2|dh|^2_g|du|^2_g+\frac{\left(\Delta_g u\right)^2}{n}\\
&+\frac{4}{n}\la du,dh\ra_g\Delta_gu-2\la dh,du\ra_g\Delta_gu.
\end{align*}
Then using that 
$$|A|_{g}^2-2\la A,\mathring{B}\ra_{g} \ge -|\mathring{B}|_{g}^2=-2\left( |dh|^2_g|du|^2_g+\la du,dh\ra_g^2\right)+\frac4n \la du,dh\ra_g^2,$$ 
we eventually obtain
\begin{align*}
e^{4h}\left(\la dLu,du\ra_{\bg}-\frac12 L |du|^2_{\bg}\right) & \ge\frac{\left(\Delta_g u\right)^2}{n}\\
&+\left(-\Ricm+\Delta_g h\right) |du|^2_g\\
&-\left(2-\frac4n\right)\la du,dh\ra_g^2\\
&-2\,\frac{n-2}{n}\la du,dh\ra_g\Delta_gu.
\end{align*}
By the Young inequality, we have
$$-2\,\frac{n-2}{n}\la du,dh\ra_g\Delta_gu\ge -\left(\frac1n-\frac{1}{n+q}\right)\left(\Delta_gu\right)^2-\frac{(n-2)^2n(n+q)}{n^2q}\la du,dh\ra_g^2,$$
hence the Cauchy-Schwarz inequality and some simple computations yield the desired inequality.
\end{proof}

\begin{coro}\label{cor:TC}Let $(M^n,g)$ be a Riemannian manifold and $\varphi\in \cC^2(M)$ such that $\varphi\ge 1$ and
\begin{equation}\label{eq:varphi}
\Delta_g \varphi-\lambda\Ricm\varphi\ge -\kappa \varphi
\end{equation}
for some  $\lambda>n-2$ and $\kappa\ge 0$.  If we set $h=\frac1\lambda \log\varphi$, then the weighted Riemannian manifold $(M^n,e^{2h}g, e^{2h} \nu_g)$ satisfies the $ \BE(\kappa/\lambda,n+q)$ condition, where  $q=(n-2)^2/(\lambda-(n-2)) $.
\end{coro}
\proof This is a direct consequence of \lref{eq:BEHS}.  Indeed,  note first that $\lambda = c(n,q)$ by our choice of $q$. Then the chain rule \eqref{eq:chain} implies that
$$\Delta_g h=\frac{\Delta_g \varphi}{ \lambda\varphi}+\frac{1}{\lambda}\frac{|d \varphi|^2_g}{\varphi^2}=\frac{\Delta_g \varphi}{ \lambda\varphi}+\lambda|dh|_g^2=\frac{\Delta_g \varphi}{ \lambda\varphi}+c(n,q)|dh|^2_{g}$$
so that,  using successively \eqref{eq:varphi} and $h\ge 0$, we get
\begin{align*}
\left( -\Ricm+\Delta_gh-c(n,q)|dh|^2_{g}\,\right) e^{-2h}|du|^2_{\bg} & = \left( \frac{\Delta_g \varphi}{ \lambda\varphi} -\Ricm\,\right) e^{-2h}|du|^2_{\bg}\\
&  \ge - \frac{\kappa}{\lambda} e^{-2h}|du|^2_{\bg} \ge - \frac{\kappa}{\lambda}  |du|^2_{\bg}. \qedhere
\end{align*}
\endproof

\section{Proof of Theorem \ref{Maintheo}}\label{pfA}

\subsection{Kato condition and the bottom of the spectrum} In this subsection, we recall a useful fact about Schr\"odinger operators whose potential satisfies a so-called Dynkin condition.  Let $(M^n,g)$ be a complete Riemannian manifold and $V\ge 0$ a locally integrable function on $M$.  For any $T >0$,  we define
$$\mbox{k}_T(V):=\sup_{x\in M} \iint_{[0,T]\times M} H(s,x,y)V(y)\di \nu_g(y)\di s.$$
It is classical (see e.g.~\cite{SV, Gueneysu-17}) that if $V$ satisfies the Dynkin condition
$$\Kato_T(V)<1,$$ then the quadratic form
$$ \cC_c^\infty(M) \ni u \mapsto \int_M\left( |du|_g^2-Vu^2\right)\di \nu_g$$ is bounded from below on $L^2(M,\nu_g)$,  hence it generates a self-adjoint operator $H_V=\Delta_g-V$ whose heat semi-group $\left\{e^{-tH_V}\right\}_{t>0}$ acts boundedly on each $L^p(M,\nu_g)$. More precisely, for any $p\in [1,+\infty]$ there exist $C>0$ and $\upomega\ge 0$ such that for any $t\ge 0,$
$$\left\| e^{-tH_V}\right\|_{L^p\to L^p}\le  C e^{\upomega t}.$$
 The proof of this classical result yields more precise information: 

\begin{prop}\label{prop:jaugeKato} Let $(M,g)$ be a complete Riemannian manifold. Let $V\ge 0$ be a locally integrable function on $M$ such that for some $T,\beta >0$,
$$\Kato_T(V)\le  1-e^{-\beta T}.$$ Then: 
\begin{enumerate}[i)]
\item for any $\phi\in \cC_c^\infty(M)$, 
\begin{equation}\label{eq:semibounded}
\int_M\left[\, |d\phi|_g^2-V\phi^2\, \right]\,\di \nu_g\ge -\beta  \int_M\phi^2\,\di \nu_g;
\end{equation}
\item $\text{spec } H_V\subset [-\beta,+\infty);$
\item for any $p\in [1,+\infty]$ and $t \ge 0$:
$$\left\| e^{-tH_V}\right\|_{L^p\to L^p}\le e^{\beta(t+T)}.$$
\end{enumerate}
\end{prop}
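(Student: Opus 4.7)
The plan is to bootstrap from a sharp $L^\infty$ bound on the gauge function $u_t \df e^{-tH_V}\mathbf{1}$ to all three assertions, in the order (iii) $\Rightarrow$ (ii) $\Rightarrow$ (i). The key ingredient is a quantitative Khasminskii-type estimate on the Dyson expansion of the perturbed heat semigroup, which refines the classical bound $\|u_T\|_\infty \le 1/(1 - \Kato_T(V))$ to the sharp form $\|u_T\|_\infty \le e^{\beta T}$ under our hypothesis.

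\textbf{Step 1: Khasminskii iteration.} I will expand $u_T$ as a Dyson series $u_T = \sum_{k \ge 0} I_k$, where $I_k$ is an iterated integral of $k$ copies of $V$ against $k$ heat kernels on the simplex $\{0 \le t_1 \le \cdots \le t_k \le T\}$. The key reduction is to show inductively, via Chapman--Kolmogorov and the change of variables $s_j = t_j - t_1$, that $\sup_x I_k(x) \le \Kato_T(V)^k$. The monotonicity of $t \mapsto \Kato_t(V)$ (valid since $V \ge 0$) closes the induction. Summing the geometric series and using the hypothesis yields $\|u_T\|_\infty \le 1/(1 - \Kato_T(V)) \le e^{\beta T}$. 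Writing $t = nT + r$ with $r \in [0,T)$ and invoking the semigroup property $e^{-tH_V} = e^{-rH_V}(e^{-TH_V})^n$ extends this to $\|u_t\|_\infty \le e^{\beta(t+T)}$ for every $t \ge 0$.

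\textbf{Step 2: From the gauge bound to (iii), (ii), (i).} Since $V \ge 0$, the heat kernel of $H_V$ is non-negative, so $|e^{-tH_V} f| \le e^{-tH_V}|f| \le \|f\|_\infty\, u_t$ pointwise, giving (iii) at $p = \infty$. Self-adjointness of $H_V$ implies the symmetry of its heat kernel, and Fubini then yields the dual bound $\|e^{-tH_V}\|_{L^1 \to L^1} \le \|u_t\|_\infty$; Riesz--Thorin interpolation delivers (iii) for all $p \in [1,+\infty]$. Specialising (iii) to $p = 2$ and applying the spectral theorem,
\[
e^{-t\,\inf\,\mathrm{spec}(H_V)} = \|e^{-tH_V}\|_{L^2 \to L^2} \le e^{\beta(t+T)},
\]
so letting $t \to +\infty$ gives $\inf \mathrm{spec}(H_V) \ge -\beta$, which is (ii). Finally (i) is the variational restatement of (ii) on $\mathcal{C}_c^\infty(M) \subset \mathrm{dom}(H_V^{1/2})$.

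\textbf{Main obstacle.} The crux is the inductive Khasminskii bound of Step~1; once it is in place, the remaining deductions are automatic. The delicate point is verifying that the Dyson series makes sense and converges uniformly in $x$ on a possibly non-compact and stochastically incomplete manifold, and that the semigroup identity can be chained for $t > T$. The existence of $H_V$ as a self-adjoint operator guaranteed by the classical $\Kato_T(V) < 1$ theory, together with the strict inequality $\Kato_T(V) \le 1 - e^{-\beta T} < 1$, will provide both the convergence of the Dyson series and the pointwise sub-Markov comparison used throughout Step~2.
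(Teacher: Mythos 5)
Your proposal is correct and follows essentially the same route as the paper: your Dyson/Khasminskii expansion with the bound $\sup_x I_k\le \Kato_T(V)^k$ is exactly the paper's Neumann series for the Volterra operator $\mcK u(t,x)=\int_0^t\int_M H(t-s,x,y)V(y)u(s,y)\,\di\nu_g(y)\di s$, whose $L^\infty$-norm is at most $\Kato_T(V)\le 1-e^{-\beta T}$, and the paper likewise extends to $t>T$ by the semigroup law, passes to $L^1$ by self-adjointness and interpolates, and deduces i) and ii) from the $p=2$ case of iii). The only presentational difference is that the paper makes your ``delicate point'' fully rigorous by truncating $V$ to bounded compactly supported $V_\ell$ and comparing $e^{-tH_V}\un$ with the exact fixed point $I=(\id-\mcK)^{-1}\un$ to sidestep possible stochastic incompleteness.
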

\proof Note that ii) follows from i) and the min-max characterization of the elements of $\text{spec } H_V$. Moreover, i) is a consequence of the case $p=2$ in iii).  Indeed, the latter implies that $e^{\beta(t+T)} \ge \lambda_1(e^{-tH_V}) = e^{-t\lambda_1(H_V)}$ for any $t>0$; taking the logarithm, dividing the resulting inequality by $t$, and letting $t$ tend to $+\infty$ gives $\lambda_1(H_V) \ge - \beta$ as desired. Therefore,  we need only to prove iii).

For any $\ell \in \N$, set $V_\ell \df \min( V,\ell)\un_{B_\ell(o)}$ and note that $\Kato_T(V_\ell)\le \Kato_T(V)$.  Since each $V_\ell$ is bounded, the quadratic form 
$$
\cC_c^\infty(M) \ni \phi \mapsto \int_M\left[\, |d\phi|_g^2-V_\ell\phi^2\, \right]\,\di \nu_g
$$
is bounded from below, hence the canonical Friedrichs extension  $H_{V_\ell}$ of the associated operator $\Delta_g - V_\ell$ is well-defined.  If $p\ge 0$ is such that
$$\left\| e^{-tH_{V_\ell}}\right\|_{L^p\to L^p}\le e^{\beta(t+T)}$$
for any $\ell$ and $t>0$, then the monotone convergence theorem ensures that the Friedrich extension $H_V$ of $\Delta_g - V$ and the pointwise limit $e^{-tH_{V}}$ of $\{ e^{-tH_{V_\ell}}\}$ are well-defined, and that $\left\| e^{-tH_{V}}\right\|_{L^p\to L^p}\le e^{\beta(t+T)}$. Therefore, from now on, we assume that $V$ is bounded with bounded support. 

Using selfadjointness and the Schur test, we need only to prove that for any $t \ge 0$,
$$M_V(t):= \left\|e^{-tH_{V}}\un\right\|_{L^\infty}\le e^{\beta(t+T)}.$$
Since we do not assume that $(M,g)$ is stochastically complete, the following Cauchy problem on $L^\infty(\mathbb{R}_+ \times M)$ may have more than one solution:
\begin{equation}\label{eq:Cauchy}\tag{*}
\left\{\begin{array}{l}
\left(\frac{\partial}{\partial t} +\Delta_g-V\right)u=0\\
u(0,\cdot)=\un.
\end{array}\right.
\end{equation}
On one hand, we define a solution $J$ as the monotone limit of the solutions $J_\ell$ of  the Cauchy problems:
$$\left\{\begin{array}{l}
\left(\frac{\partial}{\partial t} +\Delta_g-V\right)u=0\\
u(0,\cdot)=\un_{B_\ell(o)}.
\end{array}\right.$$
By the Duhamel formula applied to the Hilbert space $L^2(M)$ (see \cite[Chapter 4]{Lunardi}), we know that each $J_\ell : (t,x) \mapsto e^{-tH_V}\un_{B_\ell(o)}(x)$ satisfies that for any $t\in [0,T]$,
\begin{equation*}\label{eq:preDuhamel}
J_\ell(t,\cdot)=e^{-t\Delta_g}\un_{B_\ell(o)} +\int_0^t e^{-(t-s)\Delta_g}[V e^{-sH_V}\un_{B_\ell(o)}] \,  \di s.
\end{equation*}
Taking the monotone limit of both sides of the equation, we obtain
\begin{equation}\label{eq:Duhamel}
J(t,\cdot)=e^{-t\Delta_g}\un +\int_0^t e^{-(t-s)\Delta_g}[V e^{-sH_V}\un] \,  \di s.
\end{equation} 
Let us introduce the linear operator $\mcK\colon L^\infty([0,T]\times M)\rightarrow L^\infty([0,T]\times M)$ defined by
$$(\mcK u)(t,x)=\int_0^t\int_M H(t-s,x,y)V(y)u(s,y)\di\nu_g(y)\di s$$ for any $(t,x) \in [0,T]\times M$. Setting $f(t,x) \df e^{-t\Delta_g}\un(x)$, we can rewrite \eqref{eq:Duhamel} as
$$J=f+\mcK(J).$$ Notice that $\mcK$ preserves positivity, i.e.
$$u\ge 0\Rightarrow \mcK u\ge 0,$$ 
and that
$$\left\|\mcK \right\|_{L^\infty\to L^\infty }\le \Kato_T(V)\le  1-e^{-\beta T}.$$ From the latter, we get that $\mathrm{Id} - \mcK$ is invertible on $L^\infty([0,T]\times M)$ with inverse $\sum_{\ell \ge 0} \mcK^\ell$. Therefore, $J = (\mathrm{Id} - \mcK)^{-1}f$ and  $I\df  (\mathrm{Id} - \mcK)^{-1}\un$ satisfies
\begin{equation*}\label{eq:I}
I=\un+\mcK(I).
\end{equation*}
Using test functions, one easily checks that $I$ is a solution of \eqref{eq:Cauchy}. Since $f \le \un$ and $\mcK$ is positivity preserving, we have $\mcK^\ell f \le \mcK^\ell \un$ for any integer $\ell \ge 0$,  so that summing over $\ell$
yields
$$J\le I.$$ We easily have that 
\[
\un\le I\le e^{\beta T}
\]
so that for all $t\in [0,T]\colon$
$$M_V(t) = \|J(t,\cdot)\|_{L^\infty(M)}\le e^{\beta T}.$$
If $t>T$,  consider the integer $k$ such that $t\in [kT,(k+1)T]$. Using the semi-group law, one gets that
$$
M_V(t)\le M_V\left(k\,T\right)M_V\left(t-k\,T\right)\le  M_V\left(T\right)^k e^{\beta T} \le e^{\beta(k+1) T}\le e^{\beta(t+T) }.\eqno \qedhere$$
\endproof

The proof of the proposition easily yields that $\varphi$ in the next corollary is well-defined and satisfies the desired properties.

\begin{coro}\label{coro:jaugeKato}  Let $(M,g)$ be a complete Riemannian manifold and $V\ge 0$ a locally integrable function on $M$ such that  for some $T,\beta>0$,
$$\Kato_T(V)\le  1-e^{-\beta T}.$$  Then the equation
\begin{equation}\label{eq:equation}
H_V \varphi + 2 \beta \varphi = 2 \beta
\end{equation}
admits a weak solution $\varphi$ such that $1\le \varphi\le 2e^{\beta T}$ a.e.~on $M$.
\end{coro}

\begin{proof}
The argument to construct $I$ in the proof of the previous proposition allows to extend it to a function belonging to $L^\infty(\mathbb{R}_+ \times M)$ and satisfying
\begin{equation}\label{eq:control}
1 \le I(t,x)\le e^{\beta (T+t)}
\end{equation}
for any $(t,x) \in \mathbb{R}_+ \times M$.  Then we set
$$\varphi(x) \df 2\beta\int_0^{+\infty} e^{-2\beta t} I(t,x)\di t.\eqno \qedhere$$
\end{proof}

\begin{rem}\label{rk:reg} In the previous corollary, elliptic regularity implies that if $V$ is $\cC^{k,\alpha}$ for some $\alpha \in (0,1)$, then $\varphi$ is a strong $\cC^{k+2,\alpha}$ solution.
\end{rem}

\begin{rem}
\label{rem:dim2}
In dimension 2, we directly obtain the following.  For a complete Riemannian surface $(\Sigma^2, g)$, if there exists $T$ such that $\kato_T(\Sigma,g)$ is finite, then there exists a function $h \in \cC^2(M)$ such that 
$$0\leq h \leq 2 \kato_T(\Sigma,g) \log 4$$
and the conformal metric $g_h=e^{2h}g$ has Gauss curvature $K_{g_h}$ bounded from below by $-2\kato_T(\Sigma,g)\log(4)/T$. Indeed, it suffices to consider 
$$V=\frac{\Ricm}{2\kato_T(\Sigma,g)} $$
in the previous Corollary, $\varphi$ the corresponding weak solution of \eqref{eq:equation} and to define \mbox{$h=2\kato_T(\Sigma,g)\log(\varphi)$}. Then the result  follows from the transformation law of the Gauss curvature under conformal change
$$K_{g_h}=e^{-2h}(\Delta_g h+ K_g).$$
In this case, we do not need any restriction on the bound for the Kato constant, and moreover the $\RCD$ condition is satisfied with $N=2$. 
\end{rem}

\subsection{Proof of \tref{Maintheo}}

We are now in a position to prove \tref{Maintheo}.

\begin{proof}
Let $(M^n,g)$ be a complete Riemannian manifold satisfying \eqref{eq:NewDynkin}. Set
$$\lambda \df \frac12\left(n-2+\gamma^{-1}\right)$$ and consider $\beta>0$ such that
$$e^{-\beta T}=\frac12\left(1-(n-2)\gamma\right).$$
Since $\lambda > n-2$ and $\Ricm$ is a continuous function,  \cref{coro:jaugeKato} and Remark \ref{rk:reg} ensure that there exists $\varphi\in \cC^2(M)$  such that $1\le \varphi\le 2e^{\beta T}$ and \[\Delta_g \varphi-\lambda\Ricm  \varphi\ge -2\beta \varphi.\] 
Define
\[
h \df \frac{1}{\lambda} \log \varphi.
\]
Then \cref{cor:TC} implies that the weighted Riemanniann manifold $(M^n,e^{2h}g, e^{2h} \nu_g)$ satisfies the $ \BE(-2\beta/\lambda,n+q)$ condition with $\displaystyle q=\frac{2(n-2)^2\gamma}{1-(n-2)\gamma}\, \cdot$ Setting
\[
K  = K(n,\gamma) \df - \frac{4\ln(\frac{1}{2}(1-(n-2)\gamma))}{(n-2 + \gamma^{-1})}  \quad \text{and} \quad N = N(n,\gamma)  \df n + q,
\]
we get that $(M^n,e^{2h}g, e^{2h} \nu_g)$ satisfies the $ \BE(-K/T,N)$ condition.

Assume now that \eqref{eq:NewDynkin2} holds. We make a different choice for the  parameters $\beta$ and $\lambda$, namely
$$\beta\df 1/T \qquad \text{and} \qquad \lambda \df \frac{1-e^{-1}}{\katoT} \, ,$$
so that $q>0$ is given by
$$q=\frac{(n-2)^2\katoT}{1-e^{-1}-(n-2)\katoT} \, \cdot $$
Since $1-e^{-1}\ge \frac12$ and $1-e^{-1}-\frac13\ge \frac14$ we get that 
$$\frac1\lambda\le 2 \katoT \quad \text{and} \quad q\le 4(n-2)^2\katoT,$$
and then
$$0\le h\le \frac{1}{\lambda} \ln(2e)\le \frac{2}{\lambda}\le 4 \katoT.\eqno \qedhere $$\end{proof}

\section{Consequences on complete manifolds}\label{consman}

\subsection{Almost mononicity of the volume ratio} Theorem \ref{th:AlmostBishopGromov} is a direct consequence of the following proposition.

\begin{prop}\label{prop:towardsThC}Let $(M^n,g)$ be a complete Riemannian manifold satisfying \eqref{eq:NewDynkin2} for some $T>0$.  Then for any $x\in M$,  $\eta\in (0,1-1/\sqrt{2})$, $R\in (0, \sqrt{T}]$ and $r\in (0, (1-\eta) R]$,
\begin{equation}\label{eq:toprove}
\frac{\nu_g\left(B_R(x)\right)}{\nu_g\left(B_r(x)\right)}\le \left( \frac{R}{r} \right)^n  \exp\left(\frac{C(n)}{\log(1/(1-\eta))}\int_r^R \frac{{\mbox{k}_{s^2}(M^n,g)}}{s}\di s\right) \cdot
\end{equation}
\end{prop}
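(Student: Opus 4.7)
The plan is to prove the proposition by telescoping a one-scale volume comparison derived from \tref{Maintheo}. For each $\rho\le \sqrt T$, applying \tref{Maintheo} at scale $\rho^2$ (which is permissible because $\kato_{\rho^2}\le \kato_T<1/(3(n-2))$ by monotonicity of $t\mapsto \kato_t$) produces $f_\rho\in \cC^2(M)$ with $0\le f_\rho\le 4\kato_{\rho^2}$ such that $(M,e^{2f_\rho}g,e^{2f_\rho}\nu_g)$ is $\RCD(-4\kato_{\rho^2}/\rho^2,\,N_\rho)$ with $N_\rho \df n+4(n-2)^2\kato_{\rho^2}$. Writing $\kappa\df \kato_{\rho^2}$ and letting $\bar B^\rho_s$ denote the balls for the conformal metric $e^{2f_\rho}g$, the bound $0\le f_\rho\le 4\kappa$ yields the bi-Lipschitz sandwich $\bar B^\rho_s\subset B_s\subset \bar B^\rho_{e^{4\kappa}s}$ and the measure comparison $\nu_g\le e^{2f_\rho}\nu_g\le e^{8\kappa}\nu_g$.

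The first substantive step is the one-scale estimate
\[
\frac{\nu_g(B_b(x))}{\nu_g(B_\rho(x))}\le \left(\frac{b}{\rho}\right)^n \exp(C(n)\,\kato_{\rho^2})
\qquad\text{for all }x\in M,\ \rho\le b\le \rho/(1-\eta),\ \eta\in(0,1-1/\sqrt 2].
\]
It follows by combining the bi-Lipschitz sandwich with Bishop--Gromov applied in the $\RCD$ space to the radii $\rho$ and $e^{4\kappa}b$. Since both are comparable to $\rho$ and the curvature lower bound is $-4\kappa/\rho^2$, the hyperbolic comparison correction is of order $\exp(O(\kappa))$, and Bishop--Gromov yields a ratio at most $(e^{4\kappa}b/\rho)^{N_\rho}\exp(C(n)\kappa)$. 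The excess factors $e^{4\kappa N_\rho}$ and $(b/\rho)^{N_\rho-n}=\exp(4(n-2)^2\kappa \log(b/\rho))$ both collapse to $\exp(C(n)\kappa)$ because $N_\rho$ is controlled by $n$ and $\log(b/\rho)\le \log\sqrt 2$.

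For the telescoping, set $R_0=R$ and $R_i=(1-\eta)R_{i-1}$, and let $k\ge 1$ be the integer with $R_k\le r<R_{k-1}$. Applying the one-scale estimate at $(\rho,b)=(R_i,R_{i-1})$ for $i=1,\ldots,k-1$, and at $(\rho,b)=(r,R_{k-1})$ for the last step, the product yields
\[
\frac{\nu_g(B_R(x))}{\nu_g(B_r(x))}\le \left(\frac{R}{r}\right)^n \exp\Bigl(C(n)\Bigl[\kato_{r^2}+\sum_{i=1}^{k-1}\kato_{R_i^2}\Bigr]\Bigr).
\]
Using the monotonicity of $s\mapsto \kato_{s^2}$, one has $\log(1/(1-\eta))\sum_{i=1}^{k-1}\kato_{R_i^2}\le \int_{R_{k-1}}^R \kato_{s^2}/s\,ds\le \int_r^R \kato_{s^2}/s\,ds$, and $\kato_{r^2}\log(1/(1-\eta))\le \kato_{r^2}\log(R/r)\le \int_r^R \kato_{s^2}/s\,ds$, the last inequality because $R/r\ge 1/(1-\eta)$. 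Substituting into the exponent gives the proposition, with the factor $2$ absorbed into $C(n)$.

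The main obstacle lies in obtaining the exponent $n$ rather than the inflated $N_\rho>n$ in the one-scale comparison. The resolution is to apply Bishop--Gromov at exactly the scale $\rho$ used for the conformal change: this guarantees that the hyperbolic correction and the dimensional overshoot $N_\rho-n$ are both of size $\exp(O(\kato_{\rho^2}))$, which then absorb cleanly into the telescoped integral.
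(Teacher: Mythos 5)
Your proof is correct and follows essentially the same route as the paper's: apply Theorem \ref{Maintheo} at the smaller scale of each pair of radii, transfer the weighted Bishop--Gromov inequality back to $(M,g,\nu_g)$ via the bi-Lipschitz sandwich (absorbing the curvature correction and the dimensional overshoot $N_\rho-n$ into $e^{C(n)\kato_{\rho^2}}$), and telescope over geometric scales of ratio $1-\eta$ to produce the logarithmic integral. The only cosmetic difference is that the paper telescopes upward from $r$ and converts each Kato constant into an integral term before multiplying, whereas you telescope downward from $R$ and convert the sum at the end.
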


In order to prove this result,  we must recall some well-known facts. Consider $\kappa \ge 0$ and $N \in [1,+\infty)$.  As shown in \cite{Qian,Lott,BakryQian,WeiWylie},  the Bishop--Gromov comparison theorem holds on any complete weighted Riemannian manifold $(M,g,\bar\nu)$ satisfying the $ \BE(-\kappa,N)$ condition: for any $x\in M$ and $0< r<R$,$$
\frac{\bar\nu\left(B_R(x)\right)}{\bar\nu\left(B_r(x)\right)}\le \frac{\bV_{\kappa,N}(R)}{\bV_{\kappa,N}(r)}$$
where $$\bV_{\kappa,N}(\rho)\df\int_0^\rho \, \sinh^{N-1}(\sqrt{\kappa} s)\, \di s$$ for any $\rho>0$. We can classically bound the previous right-hand side from above to get the following estimate: for any $R\ge r >0$,
\begin{equation}\label{eq:BG}\frac{\bar\nu\left(B_R(x)\right)}{\bar\nu\left(B_r(x)\right)}\le e^{(N-1) \frac{\kappa R^2}{4} } \left(\frac{R}{r}\right)^N.\end{equation}
Indeed,  the inequality \[\frac{\di}{\di \sigma} \ln \left( \sinh(\sigma) \right) = \frac{\cosh(\sigma)}{\sinh(\sigma)}\le \frac1\sigma+\frac\sigma2\] holds for any $\sigma >0$.  For $\tau > \rho >0$, integrate the previous between $\rho$ and $\tau$ and apply the exponential function to the resulting inequality in order to get
\begin{equation}\label{eq:sinh}
\frac{\sinh(\tau)}{\sinh(\rho)} \le \frac{\tau}{\rho} e^{\frac{\tau^2-\rho^2}{4}} \, \cdot
\end{equation}
Then for any $0<r<R$,
\begin{align*}
\bV_{\kappa,N}(R)&=\int_0^R \, \sinh^{N-1}(\sqrt{\kappa} s)\, \di s\\
&=\frac Rr \ \int_0^r \, \sinh^{N-1}\left(\sqrt{\kappa}\frac{R}{r} s\right)\, \di s\\
&\le \frac Rr \ \int_0^r \, \left(\frac{R}{r}\right)^{N-1}e^{(N-1)\frac{\kappa R^2}{4}}\ \sinh^{N-1}\left(\sqrt{\kappa} s\right)\, \di s\\
&\le \left(\frac{R}{r}\right)^{N}e^{(N-1)\frac{\kappa R^2}{4}}\  \bV_{\kappa,N}(r).
\end{align*}

\proof[Proof of Proposition \ref{prop:towardsThC}]
Consider $x \in M$ and $\eta \in (0,1-1/\sqrt{2})$.  Define $\lambda(\tau)\df{\mbox{k}_{\tau^2}(M^n,g)}$ for any $\tau>0$. 

Let us first show that for any $0<r \le \sqrt{T}$ and $ \rho>r$,
\begin{equation}\label{eq:1}
\frac{\vol_g\left(B_{e^{-4\lambda(r)}\rho}(x)\right)}{\vol_g\left(B_r(x)\right)}\le  \left(\frac{\rho}{r}\right)^n\, \exp \left( C(n)\lambda(r)\left(\frac{\rho^2}{r^2}+\log\left(\frac{\rho}{r} \right)+1\right)\right).
\end{equation}
Since $\lambda$ is non-decreasing,  the assumption \eqref{eq:NewDynkin2} implies that for any $r \in (0,\sqrt{T})$,
\[
\lambda(r)\le \frac{1}{3(n-2)} \,\cdot 
\]
According to  \tref{Maintheo}, there exists $h\in \cC^2(M)$ with
\begin{equation}\label{eq:f}
0\le h\le 4\lambda(r)
\end{equation}
such that the weighted Riemannian manifold $(M^n,\, \overline{g}\df e^{2h}g,\, \bar \nu \df e^{2h}\nu_g)$ satisfies the $ \BE(-4\lambda(r)/r^2,n+4(n-2)^2\lambda(r) )$ condition.  Using an overline to denote the geodesic balls of the metric $\bg$,  for any $\rho> r$ inequality \eqref{eq:BG} leads to
\begin{equation*}
\frac{\bar \nu\left(\overline{B}_\rho(x)\right)}{\bar \nu\left(\overline{B}_r(x)\right)}\le  \left(\frac{\rho}{r}\right)^n \exp\left( 4(n-2)^{2}\lambda(r) \log\left(\frac{\rho}{r}\right)+(n+4(n-2)^2\lambda(r)-1)\lambda(r)\frac{\rho^2}{r^2}\right).
\end{equation*}
Using $\lambda(r)\leq 1/3(n-2)$ in the second summand of the exponential, we easily get
\begin{equation}\label{eq:2}\frac{\bar \nu\left(\overline{B}_\rho(x)\right)}{\bar \nu\left(\overline{B}_r(x)\right)}\le  \left(\frac{\rho}{r}\right)^n\, \exp\left( C(n)\lambda(r)\left(\frac{\rho^2}{r^2}+\log\left(\frac{\rho}{r}\right)\right)\right).\end{equation}
From \eqref{eq:f} we deduce that $$\overline{B}_r(x)\subset B_r(x)\text{, \quad }B_{e^{-4\lambda(r)}\rho}(x)\subset \overline{B}_\rho(x)$$
and
\begin{equation}\label{eq:meas}
\nu_g \le \bar\nu \le e^{8\lambda(r)} \nu_g,
\end{equation}
which easily lead to \eqref{eq:1} from \eqref{eq:2}.

We are now in a position to prove \eqref{eq:toprove} for $R \in (0,\sqrt{T}]$ and $r \in [R/2,(1-\eta)R]$.  Apply \eqref{eq:1} with $\rho = Re^{4 \lambda(r)}$:
\[
\frac{\vol_g\left(B_R(x)\right)}{\vol_g\left(B_r(x)\right)}\le  \left(\frac{R}{r}\right)^n\,  \exp\left( C(n)\lambda(r)\left(\frac{R^2}{r^2}+\log\left(\frac{R}{r}  \right) + 1 \right)\right).
\]
Since $r \ge R/2$, we deduce that
\[
\frac{\vol_g\left(B_R(x)\right)}{\vol_g\left(B_r(x)\right)}\le  \left(\frac{R}{r}\right)^n\,  \exp\left( C(n)\lambda(r)\right).
\]
Using  that $r \le (1-\eta)R$ and $\lambda$ is non-decreasing, we get
\[
\lambda(r) \le \frac{\lambda(r)}{\log(1/(1-\eta))} \int_r^R \frac{\di s}{s} \le \frac{1}{\log(1/(1-\eta))} \int_r^R \lambda(s) \frac{\di s}{s} \,
\]
so that \eqref{eq:toprove} is proved.

To conclude,  it remains to consider the case $r \in (0,R/2)$. Set $r_k \df (1-\eta)^{-k} r$ for any $k \in \mathbb{N}$. Let $\ell$ be the integer such that
\[
(1-\eta)^2R < r_\ell \le (1-\eta) R < \sqrt{T}. \]
Note that $r_\ell \in [R/2,(1-\eta)R]$ because $R/2<(1-\eta)^2R$.  Moreover, since
\[
(1-\eta) r_k = r_{k-1}  \ge (1-\eta)^2 r_k\ge r_k/2,
\]
we have $r_{k-1} \in [r_k/2, (1-\eta)r_k]$ for any $k \in \{1,\dots,\ell\}$. Therefore, the previous argument yields that
$$\frac{\vol_g\left(B_R(x)\right)}{\vol_g\left(B_{r_\ell}(x)\right)}\le \left( \frac{R}{{r_\ell}} \right)^{n}\ \exp \left( {\frac{C(n)}{\log(1/(1-\eta))}\int_{r_\ell}^R \lambda(s) \frac{\di s}{s}} \right) $$
and 
 $$\frac{\vol_g\left(B_{r_k}(x)\right)}{\vol_g\left(B_{r_{k-1}}(x)\right)}\le\left( \frac{{r_k}}{{r_{k-1}}} \right)^n\ \exp \left( \frac{C(n)}{\log(1/(1-\eta))}\int_{r_{k-1}}^{r_k} \lambda(s) \frac{\di s}{s} \right)$$
for any $k \in \{1,\dots,\ell\}$, and \eqref{eq:toprove} follows by taking the product of all these inequalities.
\endproof

\subsection{Existence of good cut-off functions}
\begin{prop}\label{prop:cutoff} Let $(M^n,g)$ be a complete Riemannian manifold satisfying \eqref{eq:NewDynkin2} for some $T>0$. Then for any $x\in M$ and $r\in (0,\sqrt{T})$, there exists $\chi_{x,r}\in \cC^4(M)$ such that:
\begin{enumerate}[i)]
\item $\chi_{x,r}=1$ on $B_{r/2}(x)$ and $\chi_{x,r}=0$ outside $B_{r}(x)$,
\item $\displaystyle \left|d\chi_{x,r}\right|_g^2+\left| \Delta_g \chi_{x,r}\right|\le \frac{C(n)}{r^2} \, \cdot $
\end{enumerate}
\end{prop}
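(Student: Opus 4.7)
The plan is to use \tref{Maintheo} to transfer the cut-off problem to the auxiliary $\RCD$ space it provides, construct the cut-off there via classical machinery, and pull it back to $(M,g)$. Under \eqref{eq:NewDynkin2}, \tref{Maintheo} yields $f\in \cC^2(M)$ with $0\le f\le C_0\df 4\katoT\le 4/(3(n-2))$ such that the weighted Riemannian manifold $(M,\bg,\bar\nu)$ with $\bg\df e^{2f}g$ and $\bar\nu\df e^{2f}\nu_g$ satisfies the Bakry-\'Emery condition $\BE(-4\katoT/T,\, n+4(n-2)^2\katoT)$. Since $g\le \bg\le e^{2C_0}g$, the associated distances satisfy $d\le \bar d\le e^{C_0}d$, which gives the ball inclusions
\[
B_{s/e^{C_0}}(x)\subseteq \overline{B}_s(x)\subseteq B_s(x)
\]
for every $s>0$, where $\overline{B}_s$ denotes the ball for $\bar d$. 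The weighted Laplacian $L\df e^{-2f}\Delta_g$ then satisfies $\Delta_g u=e^{2f}Lu$ and $|du|^2_g=e^{2f}|du|^2_{\bg}$ for any smooth $u$.

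I would then invoke the standard construction of good cut-off functions on smooth weighted Riemannian manifolds satisfying $\BE(K,N)$ with $N<\infty$ (Cheeger-Colding, Cheeger-Naber, or Mondino-Naber for the $\RCD$ setting): for any $x\in M$ and concentric radii $\bar a<\bar b\le \sqrt T$, there exists a smooth function $\bar\chi$ with $\bar\chi\equiv 1$ on $\overline{B}_{\bar a}(x)$, $\mathrm{supp}\,\bar\chi\subseteq \overline{B}_{\bar b}(x)$ and pointwise bounds
\[
|d\bar\chi|^2_{\bg}+\bar b^2|L\bar\chi|\le C_1,
\]
where $C_1$ depends only on $N$, $K\bar b^2$ and the ratio $\bar b/\bar a$. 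I would choose $\bar a\df e^{C_0}r/2$ and $\bar b\df r$ so that $B_{r/2}(x)\subseteq \overline{B}_{\bar a}(x)$ and $\overline{B}_{\bar b}(x)\subseteq B_r(x)$; since $|K|\bar b^2\le 4\katoT\le 4/(3(n-2))$ and $\bar b/\bar a=2e^{-C_0}$ are both controlled by $n$ alone, the constant $C_1$ depends only on $n$. Setting $\chi_{x,r}\df\bar\chi$ gives property (i), and property (ii) follows by transferring the $\bg$-bounds through $|d\chi_{x,r}|^2_g=e^{2f}|d\bar\chi|^2_{\bg}$ and $\Delta_g\chi_{x,r}=e^{2f}L\bar\chi$, both of which cost only a bounded multiplicative factor $e^{2C_0}$. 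The $\cC^4$ regularity is inherited from the smoothness of the cut-off construction in the smooth setting, where the heat-semigroup mollification involved preserves $\cC^\infty$.

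The main technical obstacle is that the cut-off construction requires $\bar a<\bar b$, i.e., $e^{C_0}<2$. For $n\ge 4$ this holds automatically since $C_0\le 4/(3(n-2))\le 2/3<\log 2$, but for $n=3$ one has only $C_0<4/3$, so the naive choice of radii can fail. In that borderline case I would either refine the construction of $f$ in the proof of \tref{Maintheo} to secure $f\le\log 2$ at the cost of slightly worse constants $K,N$ depending only on $n$ (e.g.~by further tuning the parameters $\lambda,\beta$ in that proof), or iterate the cut-off construction at a geometric sequence of intermediate $\bg$-radii in order to stitch together a cut-off with the desired inner and outer $g$-balls.
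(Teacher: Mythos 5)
Your overall strategy---conformal change via \tref{Maintheo}, construction of cut-offs on the resulting $\BE(K,N)$ weighted manifold, and transfer back through the bi-Lipschitz/bounded-density comparison---is exactly the paper's. The transfer estimates $|d\chi|^2_g=e^{2f}|d\chi|^2_{\bg}$ and $\Delta_g\chi=e^{2f}L\chi$ are correct, and your single-annulus argument does close the proof for $n\ge 4$.

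The genuine gap is the case $n=3$, which you flag but do not resolve. Neither of your two remedies works as stated. For the first: in the gauge construction one only has $1\le\varphi\le 2e^{\beta T}$ together with the constraints $\lambda\,\katoT\le 1-e^{-\beta T}$ and $\lambda>n-2$; for $n=3$ and $\katoT$ close to $1/(3(n-2))=1/3$ this forces $\lambda<3$, and minimizing $(\log 2+\beta T)/\lambda$ over all admissible $\beta$ gives an a priori bound on $f$ of about $0.89>\log 2$. So ``further tuning $\lambda,\beta$'' cannot secure $f\le\log 2$ without a genuinely sharper estimate on $\varphi$, which you do not supply. For the second: when $e^{C_0}r/2\ge r$ there are simply no intermediate radii to interpolate through---the smallest $\bg$-ball centered at $x$ containing $B_{r/2}(x)$ already fails to sit inside $B_r(x)$---so a geometric sequence of concentric radii does not address the obstruction, and it is unclear how one would ``stitch'' such cut-offs while preserving the Laplacian bound. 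The paper's fix is different and is the missing ingredient: cover $B_{r/2}(x)$ by finitely many small $\bg$-balls $\overline{B}_{r/16}(z_i)$ with $z_i\in B_{r/2}(x)$ and the $\overline{B}_{r/32}(z_i)$ pairwise disjoint, so that Bishop--Gromov on the weighted manifold bounds the number of balls by a dimensional constant $N(n)$; then sum the Mondino--Naber cut-offs $\phi_{z_i,r/8}$ to get $\xi_{x,r}\ge 1$ on $B_{r/2}(x)$, vanishing outside $B_r(x)$, with $|d\xi_{x,r}|_g^2+|\Delta_g\xi_{x,r}|\le C(n)N(n)^2/r^2$; finally post-compose with a fixed $u\in\cC^\infty(\R)$ equal to $1$ on $[1,+\infty)$ and $0$ on $(-\infty,0]$. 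This covering-and-thresholding step sidesteps the radius-ratio problem entirely and is what you would need to add (for all $n$, or at least for $n=3$) to make the proof complete.
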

\proof
By \tref{Maintheo}, there exists $h\in \cC^2(M)$ with
\begin{equation}\label{eq:contr}
0\le h\le 4\katoT\le 4/3
\end{equation}
such that $\left(M,\, \overline{g} \df e^{2h} g, \, \overline{\nu} \df e^{2h}\nu_g\right)$ is an $\RCD(-4/(3T),3n)$ space. Let $\overline{\dist}$ be the Riemannian distance associated with $\overline{g}$ and $\overline{B}_r(x)$ the $\overline{g}-$geodesic ball centered at $x \in M$ with radius $r>0$.  By \eqref{eq:contr}, we have
$$\dist_g\le \overline{\dist}\le e^{\frac43} \dist_g.$$ Since $e^{\frac 43} \le 4$ we get the inclusions
\begin{equation}\label{eq:inclusions}
B_{r/4}(x)\subset\overline{B}_r(x)\subset B_r(x)
\end{equation}
for any $x \in M$ and $r>0$.
According to \cite[Lemma 3.1]{MondinoNaber}, for any $z\in M$ and $ r\in(0,\sqrt{T})$ there exists $\phi_{z,r}\in \cC^4(M)$ such that:
\begin{itemize}
 \item $\phi_{z,r}=1$ on $\overline{B}_{r/2}(z)$ and $\phi_{z,r}=0$ outside $\overline{B}_{r}(z)$,
 \item $\displaystyle \left|d\phi_{z,r}\right|_{\overline{g}}^2+\left| L \phi_{z,r}\right|\le \frac{C(n)}{r^2} \, \cdot$
\end{itemize}
Now \eqref{eq:contr} yields
$$\left|d\phi_{z,r}\right|_{g}^2+\left| \Delta_g \phi_{z,r}\right|\le \frac{C(n)}{r^2}$$ and \eqref{eq:inclusions} implies that $\phi_{z,r}=1$ on $B_{r/8}(z)$ and $\phi_{z,r}=0$ outside of $B_r(z)$.

Now, consider $x\in M$ and $r\in (0,\sqrt{T})$. Let $\{z_i\}_{ i\in I} \subset  B_{r/2}(x)$ be such that the balls $\overline{B}_{r/(32)}(z_i)$ are disjoint one to another and $B_{r/2}(x) \subset \cup_i \overline{B}_{r/(16)}(z_i)$. The Bishop--Gromov comparison theorem on $\left(M,\overline{g}, \overline{\nu}\right)$ classically implies that there is an integer $N_1$ depending on $n$ only such that 
$\# I\le N_1.$ Set
$$\xi_{x,r} \df \sum_i \phi_{z_i,r/8}.$$ 
Then by construction $\xi_{x,r}\ge 1$ on $B_{r/2}(x)$. Moreover $\xi_{x,r}$ is zero outside
$$\cup_i \overline{B}_{r/8}(z_i)\subset \cup_i {B}_{r/2}(z_i)\subset B_r(x).$$
We easily get the estimate
$$\left|d\xi_{x,r}\right|_{g}^2+\left| \Delta_g \xi_{x,r}\right|\le \frac{C(n)N^2}{r^2} \, \cdot $$ 
Eventually, we set
\[
\chi_{x,r}\df u\circ \xi_{x,r}
\] where $u\in \cC^{\infty}(\R)$ is some fixed function such that $u=1$ on $[1,+\infty)$ and $u=0$ on $(-\infty,0]$.
\endproof
\begin{rem} The same proof also shows that  if $$\katoT\le \gamma<\frac{1}{n-2}$$ then for any $x\in M$ and $r\in (0,\sqrt{T})$, there exists $\chi_{x,r}\in \cC^4(M)$ such that 
\begin{enumerate}[i)]
\item $\chi_{x,r}=1$ on $B_{r/2}(x)$ and $\chi_{x,r}=0$ outside $B_{r}(x)$,
\item $\displaystyle  \left|d\chi_{x,r}\right|_g^2+\left| \Delta_g \chi_{x,r}\right|\le \frac{C(n,\gamma)}{r^2} \, \cdot$
\end{enumerate}
 
\end{rem}

As mentioned in the introduction and in \cite[Remark 3.4]{CMT1}, the existence of cut-off functions like in the previous proposition implies that all the results of \cite{CMT1,CMT2} extend to complete Riemannian manifolds.  Indeed, our previous work relies, among others, on a Li--Yau type inequality: the restriction to the case of closed Riemannian manifolds was then due to the fact that this inequality \cite[Proposition 3.3]{Carron:2016aa} was proved only for closed  manifolds. Moreover,  it is known that a complete Riemannian manifold  $(M^n,g)$ with cut-off functions as above and such that $\katoT\le 1/(16n)$ satisfies the same Li--Yau type inequality (see \cite[Proposition 3.16]{Carron:2016aa}), this allowing to apply our results in the complete setting. We will not state all the results of \cite{CMT1,CMT2} that now hold true on complete Riemannian manifolds but we will  focus on some key results.

\subsubsection{Monotonicity of heat ratios}  Let $(M^n,g)$ be a complete Riemannian manifold. For any $t>0$ and $x,y\in M$,  set \[U(t,x,y)\df-4t\log\left( (4\pi t)^{\frac n2} H(t,x,y)\right).\]

\begin{theo}\label{Theo:heatratio} Let $(M^n,g)$ be a complete Riemannian manifold such that for some $T>0$,
$$\katoT\le \frac{1}{16n} \quad \text{and}\quad \int_0^T \frac{{\mbox{k}_{t}(M^n,g)}}{t}\di t<\infty.$$
For any $t\in (0,T)$, set
$$\Phi(t)\df \int_0^t \frac{{\mbox{k}_{\tau}(M^n,g)}}{\tau} \di \tau.$$
Then for any $t\in (0,T)$ and $s>0$ there exists $\overline{\lambda}=\overline{\lambda}(n,\Phi(T), s/t)>0$ such that 
$\lim_{\sigma\to 0+} \overline{\lambda}(n,\Phi(T), \sigma)=0$ and the function 
$$\lambda \in (0,\overline{\lambda}]\mapsto e^{c_n\Phi(\lambda t)\left(\frac ts -\frac st\right)}\int_M \frac{e^{-\frac{U(\lambda t,x,y)}{4\lambda s}}}{\left(4\pi \lambda s\right)^{\frac n 2}} \di \nu_g(y)$$ is monotone. It is non-increasing if $s\ge t$ and non-decreasing if $s\le t.$
\end{theo}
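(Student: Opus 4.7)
The plan is to reduce the statement to its closed-manifold analogue, which has already been established in \cite{CMT1,CMT2}. Inspection of those proofs reveals that the only essential reason the arguments were restricted to closed manifolds is the use of the Li--Yau type inequality \cite[Proposition 3.3]{Carron:2016aa}, proved there only in the compact setting. The same reference, however, contains \cite[Proposition 3.16]{Carron:2016aa}, asserting that the very same Li--Yau inequality holds on any complete manifold $(M^n,g)$ with $\katoT\le 1/(16n)$, provided one can produce smooth cut-off functions $\chi_{x,r}$ equal to $1$ on $B_{r/2}(x)$, vanishing outside $B_r(x)$, and satisfying $|d\chi_{x,r}|_g^2+|\Delta_g\chi_{x,r}|\le C(n)/r^2$.

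The first step is therefore to observe that \pref{prop:cutoff} supplies exactly such cut-offs. Under the hypothesis $\katoT\le 1/(16n)$ one certainly has $\katoT<1/(3(n-2))$, so \eqref{eq:NewDynkin2} is satisfied and \tref{Maintheo} applies with uniform constants, which is precisely the ingredient that \pref{prop:cutoff} uses to construct $\chi_{x,r}$. Invoking \cite[Proposition 3.16]{Carron:2016aa} then yields the desired Li--Yau inequality on $(M^n,g)$.

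The second step is to run the monotonicity computation of \cite{CMT2} verbatim on the complete manifold: differentiate the quantity in $\lambda$, use the heat equation $\partial_\tau H=-\Delta_y H$ to rewrite the time derivative as a spatial Laplacian acting on $H(\lambda t,x,\cdot)$, integrate by parts in the $y$ variable, and extract the sign of the resulting expression from the Li--Yau inequality. The correction factor $e^{c_n\Phi(\lambda t)(t/s-s/t)}$ is designed precisely to absorb the remainder produced by the negative part of the Ricci curvature, whose cumulative contribution along the heat semigroup is measured by $\Phi(\lambda t)$ via the Kato integral; the change of sign between the regimes $s\ge t$ and $s\le t$ mirrors the change of sign of $t/s-s/t$.

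The step that I expect to require the most care is the justification of the integration by parts on the noncompact $M$. To handle it I would combine the Gaussian upper bound on $H(t,x,\cdot)$ (itself a consequence of the Li--Yau inequality together with the volume doubling furnished by \pref{prop:towardsThC}) with an exhaustion argument using the cut-offs $\chi_{x,R}$ of \pref{prop:cutoff} and letting $R\to\infty$. This is the unique point where the closed-manifold computation does not transfer literally, but the Gaussian decay guaranteed by the Kato hypothesis makes the passage to the limit routine, so that the monotonicity of the prescribed quantity, and its correct direction depending on the sign of $s-t$, follow exactly as in the compact case.
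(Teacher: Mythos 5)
Your overall strategy --- reduce to the closed-manifold argument by first securing the Li--Yau inequality on the complete manifold via the cut-off functions of \pref{prop:cutoff} and \cite[Proposition 3.16]{Carron:2016aa}, then rerun the monotonicity computation --- is exactly the route the paper takes (see the discussion preceding \tref{Theo:heatratio} and Remark \ref{rem:newstrong}; note the relevant precursor is \cite[Corollary 5.10]{CMT1}, not \cite{CMT2}). The worry about integration by parts on a noncompact manifold is not where the difficulty lies; that part is indeed handled by the existing machinery once the cut-offs and the Li--Yau inequality are in place.

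The genuine gap is that you treat the theorem as a verbatim transfer of \cite[Corollary 5.10]{CMT1}, but the statement here is strictly stronger than what that transfer yields. The closed-manifold result assumes the \emph{strong} Kato condition $\int_0^T \sqrt{\mbox{k}_t(M^n,g)}\,t^{-1}\di t<\infty$ and its correction factor involves $\int_0^{\lambda t}\sqrt{\mbox{k}_\tau}\,\tau^{-1}\di\tau$, because the Li--Yau inequality of \cite[Proposition 3.3]{Carron:2016aa}, as stated there, carries an error term proportional to $\sqrt{\mbox{k}_T(M^n,g)}$. The theorem you are asked to prove only assumes $\int_0^T \mbox{k}_t(M^n,g)\,t^{-1}\di t<\infty$ and uses $\Phi(\lambda t)=\int_0^{\lambda t}\mbox{k}_\tau\,\tau^{-1}\di\tau$ in the exponent; running the old argument ``exactly as in the compact case'' cannot absorb the remainder with this smaller $\Phi$. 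The missing ingredient, supplied in Remark \ref{rem:newstrong}, is a re-optimization of the parameters in the proof of the Li--Yau inequality (taking $\delta\simeq \mbox{k}_T(M^n,g)^2$ and $\alpha=1-\sqrt{n\delta/(2-\delta)}$), which upgrades the error term from $\sqrt{\mbox{k}_T}$ to a multiple of $\mbox{k}_T$. Without this refined Li--Yau inequality, your argument only proves the theorem under the stronger square-root integrability hypothesis and with a different (larger) correction factor, so the statement as given does not follow.
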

\begin{rem}
\label{rem:newstrong} In \cite[Corollary 5.10]{CMT1} we used the Li--Yau type inequality to prove the previous monotonicity under the assumptions 
$$\katoT\le \frac{1}{16n} \quad \text{and}\quad \int_0^T \frac{{\sqrt{\mbox{k}_{t}(M^n,g)}}}{t}\di t<\infty.$$
A close look at the proof of  \cite[Proposition 3.3]{Carron:2016aa} shows that a different choice of the parameters $\delta,\alpha$, namely
$$\delta\simeq \left(\mbox{k}_{T}(M^n,g)\right)^2\text{ and } \alpha=1-\sqrt{\frac{n\delta}{2-\delta}} \, ,$$ leads to a Li--Yau inequality where the term $\sqrt{\mbox{k}_{T}(M^n,g)}$ is replaced by a multiple of $\mbox{k}_{T}(M^n,g)$. Theorem \ref{Theo:heatratio} is then obtained by using this latter version of the Li--Yau inequality in the proof of \cite[Corollary 5.10]{CMT1}.
\end{rem}
\subsubsection{Local doubling and Poincaré}  As \cite{Carron:2016aa} shows, the validity of the Li--Yau inequality on a complete Riemannian manifold  satisfying 
$$\katoT\le \frac{1}{16n}$$ 
implies that $(M^n,g)$ is locally doubling and satisfies the local $L^2$ Poincaré inequality; note that the latter implies the $L^{2-\eps}$ one for some $\eps>0$, see \cite{KeithZhong}.  Below, we prove the local doubling property and the  local $L^1$ Poincaré inequality, using the fact that both properties are preserved under a bi-Lipschitz change of the metric and the measure.
\begin{prop}\label{prop:DP} Let $(M^n,g)$ be a complete Riemannian manifold satisfying \eqref{eq:NewDynkin}. Then there exist $C,N,\lambda$ depending only on $n$ and $\gamma$ such that  for any $x\in M$ and $0<r\le \sqrt{T}$,
\begin{enumerate}
\item for any $s\in (0,r)$,
\[
 \nu_g\left(B_r(x)\right)\le C\left(\frac{r}{s}\right)^N \,\nu_g\left(B_s(x)\right);
\]
\item for any $\varphi\in \cC^1\left(B_r(x)\right)$ with $\displaystyle \int_{B} \varphi \di \nu=0$,
$$(\star)\ \ \left \|\varphi\right\|_{L^1\left(B_r(x)\right)}\le\lambda r \,  \left\|d\varphi\right\|_{L^1\left(B_r(x)\right)}.$$
\end{enumerate}
\end{prop}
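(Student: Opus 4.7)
The strategy is to apply Theorem~\ref{Maintheo} to produce a bi-Lipschitz $\RCD$ structure on $M$, and then transfer the doubling and local Poincar\'e inequalities available on $\RCD$ spaces back to the original Riemannian structure. Under \eqref{eq:NewDynkin}, Theorem~\ref{Maintheo} yields $f\in \cC^2(M)$ with $0\le f\le C_0=C_0(n,\gamma)$ such that the weighted Riemannian manifold $(M,\bar g,\bar \nu)\df (M,e^{2f}g,e^{2f}\nu_g)$ satisfies the $\RCD(K/T,N)$ condition for constants $K\le 0$ and $N$ depending only on $n,\gamma$. The two-sided bound on $f$ gives
$$\dist_g\le \bar \dist\le e^{C_0}\dist_g, \qquad \nu_g\le \bar \nu\le e^{2C_0}\nu_g,$$
and consequently the ball inclusions $\overline{B}_\rho(x)\subset B_\rho(x)\subset \overline{B}_{e^{C_0}\rho}(x)$ for every $x\in M$ and $\rho>0$, where $\overline{B}_\rho(x)$ denotes the $\bar g$-geodesic ball of radius $\rho$.

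For (1), I would invoke the Bishop--Gromov estimate \eqref{eq:BG} on $(M,\bar g,\bar \nu)$. On balls of radius at most $e^{C_0}\sqrt{T}$, the exponential factor appearing in \eqref{eq:BG} is uniformly bounded by a constant depending only on $n,\gamma$, since the curvature lower bound scales as $1/T$. Chaining the ball inclusions and measure comparisons above then yields, for $0<s<r\le \sqrt{T}$,
$$\nu_g(B_r(x))\le \bar \nu(\overline{B}_{e^{C_0}r}(x))\le C_1(n,\gamma)\Bigl(\frac{r}{s}\Bigr)^N\bar \nu(\overline{B}_{e^{-C_0}s}(x))\le C_2(n,\gamma)\Bigl(\frac{r}{s}\Bigr)^N\nu_g(B_s(x)),$$
which is the desired local doubling with exponent $N$.

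For (2), I would rely on the fact that every finite-dimensional $\RCD$ space carries a local $(1,1)$-Poincar\'e inequality (a consequence of Rajala's result on $\mathrm{CD}(K,\infty)$, combined with local doubling). Applied on the $\bar g$-ball $\overline{B}_r(x)\subset B_r(x)$, where the $\bar \nu$-mean of $\varphi$ vanishes by hypothesis, it gives
$$\int_{\overline{B}_r(x)}|\varphi|\,\di \bar \nu\le C_3(n,\gamma)\,r\int_{\overline{B}_r(x)}|d\varphi|_{\bar g}\,\di \bar \nu,$$
which, via $\nu_g\le \bar \nu\le e^{2C_0}\nu_g$ and $|d\varphi|_{\bar g}\le |d\varphi|_g$, converts directly into the same estimate with $\nu_g$ and $|d\varphi|_g$ on $\overline{B}_r(x)$. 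The remaining step---which I expect to be the main technical obstacle---is to enlarge the left-hand integration domain from $\overline{B}_r(x)$ to the slightly larger $g$-ball $B_r(x)\subset \overline{B}_{e^{C_0}r}(x)$. I would handle this via a telescoping/Whitney argument, covering $B_r(x)$ by a controlled number of smaller $\bar g$-balls (their cardinality bounded thanks to the doubling of part (1)) and summing the $(1,1)$-Poincar\'e inequalities along a chain joining an arbitrary point of $B_r(x)$ to the bulk of $\overline{B}_r(x)$. Alternatively, the general bi-Lipschitz stability of the $(1,1)$-Poincar\'e inequality in the setting of doubling metric measure spaces directly delivers $(\star)$ with constants depending only on $n$ and $\gamma$.
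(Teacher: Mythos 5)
Your proposal is correct and follows essentially the same route as the paper: Theorem~\ref{Maintheo} gives the bi-Lipschitz $\RCD$ structure, Bishop--Gromov \eqref{eq:BG} on the weighted manifold yields the doubling estimate after chaining the ball inclusions and measure comparisons, and the $L^1$-Poincar\'e inequality for $\mathrm{CD}(K,N)$ spaces transfers to the $g$-metric up to the domain-enlargement issue you identify. The paper resolves that last step exactly as you anticipate, by first obtaining the weak form (gradient integrated over the larger ball $B_{Cr}(x)$) and then invoking the Jerison and Maheux--Saloff-Coste self-improvement in doubling spaces, which is the chaining argument you sketch.
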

\proof According to \tref{Maintheo} there exist $K\ge 0$ and $N>n$ depending both on $n$ and $\gamma$ only, and $h\in \cC^2(M)$ with
$$1\le e^h\le C(n,\gamma),$$ such that the weighted Riemannian manifold $\left(M^n,\bg \df e^{2h} g, \bar \nu \df e^{2h}\nu_g\right)$ satisfies the $\BE\left(-K/T, N\right)$ condition.  The Bishop-Gromov inequality \eqref{eq:BG} implies that the $\bar \nu$-measure of the $\bg$-geodesic ball satisfies :
$$\forall x\in M,0< r<R\colon \bar \nu\left(\overline{B}_R(x)\right)\le e^{C(n,\gamma) \frac{R^2}{T^2}} \left(\frac{R}{r}\right)^N\, \bar\nu\left(\overline{B}_r(x)\right).$$
Using this estimate, \eqref{eq:meas} and \eqref{eq:inclusions}, we get that  for $0<s\le r\le \sqrt{T}$ and $x\in M$,
\begin{align*}
\nu_g\left(B_r(x)\right)&\le \bar \nu \left(B_r(x)\right)  \\
& \le  \bar \nu \left(\overline{B}_{4r}(x)\right)
\\
&\le C(n,\gamma)e^{C(n,\gamma) \frac{r^2}{T}} \left(\frac{r}{s}\right)^N\, \bar \nu \left(\overline{B}_{s}(x)\right)\\
&\le C(n,\gamma) \left(\frac{r}{s}\right)^N\, \nu_g \left(\overline{B}_{s}(x)\right)\\
&\le C(n,\gamma) \left(\frac{r}{s}\right)^N\, \nu_g \left({B}_{s}(x)\right).
\end{align*}
According to \cite{Sturm2006II,LottVillani2007,vonRenesse}, see also \cite[Corollary 19.13]{Villani_2009}, we also have the $L^1$-Poincaré inequality : if $x\in M$ and $r>0$ then
for any $\varphi\in \cC^1\left(\overline{B}_r(x)\right)$ with $\displaystyle \int_{\overline{B}_r(x)} \varphi \di \bar \nu=0$,
$$\int_{\overline{B}_r(x)} |\varphi| \di \bar \nu\le C(n,\gamma) e^{C(n,\gamma) \frac{r^2}{T}}\, r\,  \int_{\overline{B}_r(x)} |d\varphi|_{\bg} \di \bar \nu.$$
Then if $\varphi\in \cC^1\left(\overline{B}_{4r}(x)\right)$ with $c= \int_{\overline{B}_r(x)} \varphi \di \bar \nu$ one gets
\begin{align*}\left \|\varphi-c\right\|_{L^1(B_r(x))}&\le  \int_{\overline{B}_{4r}(x)}\left| \varphi -c\right| \di \bar \nu\\
&\le C(n,\gamma) e^{C(n,\gamma) \frac{r^2}{T}}\, r\,  \int_{\overline{B}_{4r}(x)} |d\varphi|_{\bg} \di \bar \nu\\
&\le C(n,\gamma)^2e^{C(n,\gamma) \frac{r^2}{T}}\, r\,  \int_{{B}_{4r}(x)} |d\varphi|_{ g} \di  \nu.
\end{align*}
Moreover we always have
$$\left \|\varphi-\varphi_{B_r(x)}\right\|_{L^1\left(B_r(x)\right)}\le 2\left \|\varphi-c\right\|_{L^1\left(B_r(x)\right)}.$$
In order to conclude that we get the stronger Poincaré inequality $(\star)$, we refer to the work of Jerison and of  Maheux, Saloff-Coste \cite{Je, MSc}.
\endproof

\section{Consequences for limit spaces}\label{conslim} In this section, we explain how the previous results broaden the study carried out in \cite{CMT1,CMT2} on limits of Riemannian manifolds with suitable uniform bounds on the Ricci curvature. We begin with a convenient definition.

\begin{defi}\label{def:dynkin} We say that a pointed metric measure space $(X,\dist,\mu,o)$ is a renormalized limit space if it is the pointed measured Gromov--Hausdorff limit of a sequence of pointed complete weighted Riemannian manifolds of same dimension $\left\{(M_\ell,g_\ell,\mu_\ell \df c_\ell \nu_{g_\ell},o_\ell)\right\}$ with $\{c_\ell\} \subset (0,+\infty)$ such that there exists $\kappa>0$ satisfying that, for any $\ell$,
\begin{equation}
 \kappa^{-1} c_\ell\le \nu_{g_\ell}\left(B_{\sqrt{T}}(o_\ell)\right)\le \kappa c_\ell.
\end{equation}
\end{defi}

\begin{rems} 
\begin{enumerate}
\item[]
\item We may denote a renormalized limit space by $(X,\dist,\mu,o) \leftarrow (M_\ell^n,g_\ell,\mu_\ell,o_\ell)$ if needed.

\item A common renormalization is $c_\ell=\nu_{g_\ell}\left(B_{\sqrt{T}}(o_\ell)\right)^{-1}$ for all $\ell$.
\end{enumerate}
\end{rems}

\subsection{Dynkin limit spaces}

Consider a sequence $\left\{(M_\ell,g_\ell,\mu_\ell ,o_\ell)\right\}$ as in Definition \ref{def:dynkin} and assume that it additionnally satisfies \eqref{eq:uniform} for uniform $T>0$ and $\gamma>0$. In \cite{Carron:2016aa,CMT1}, it was shown that if the manifolds are closed and $\gamma < 1/(16n)$, then the sequence $\left\{(M_\ell,g_\ell,\mu_\ell,o_\ell)\right\}$ is uniformly doubling, so that it admits limit points in the pointed measured Gromov--Hausdorff topology thanks to Gromov's compactness theorem.  The  constant $1/(16n)$ was used in a fixed point argument leading to the uniform doubling condition.  We called such limit spaces Dynkin limits. 

In the present paper,  the doubling condition given by \pref{prop:DP} allows to work in a more general setting where closedness is removed and $\gamma < 1/(n-2)$. This is why we adopt the following new definition.

\begin{defi}\label{def:renormalized} A renormalized limit space $(X,\dist,\mu,o) \leftarrow (M_\ell^n,g_\ell,\mu_\ell,o_\ell)$ is called a Dynkin limit space if there exist $T>0$ and $\gamma \in(0, 1/(n-2))$ such that \eqref{eq:uniform} holds.
\end{defi}

We prove the following result for Dynkin limit spaces.  It extends and refines \cite[Theorem 1.1]{CMT2}.

\begin{prop}\label{prop:Dynlinreg} Any Dynkin limit space $(X,\dist,\mu,o)$ is $k$-rectifiable for some integer $k \in \{2,\ldots,n\}$,  where $n$ is the dimension of the approximating manifolds. This means that there exists a countable collection $\{(V_i,\phi_i)\}_i$ such that $\{V_i\}$ are Borel subsets covering $X$ up to a $\mu$-negligible set and $\phi_i : V_i \to \R^{k}$ is a bi-Lipschitz map satisfying $(\phi_i)_\#(\mu \measrestr V_i) \ll \cH^{k}$ for any $i$. 
\end{prop}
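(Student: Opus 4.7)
The plan is to transfer the rectifiability of RCD spaces to $(X,\dist,\mu)$ via the bi-Lipschitz equivalence provided by \cref{crl:limit}.

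First, I would apply \cref{crl:limit} to obtain a distance $\odist$ and a measure $\omu$ on $X$, bi-Lipschitz equivalent to $\dist$ and $\mu$ with constants $C=C(n,\gamma)$, such that $(X,\odist,\omu)$ is an $\RCD(K/T,N)$ space. This reduction places us in the setting of the structure theory of RCD spaces, and in particular makes available the classical rectifiability results on them.

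Next, the rectifiability theorem of Mondino--Naber \cite{MondinoNaber} combined with the constancy of the essential dimension of Bru\'e--Semola \cite{Bru__2020} (see also \cite{DPMR,KellMondino,GigliPas}) provides a countable collection $\{(V_i,\phi_i)\}_i$ such that $\{V_i\}$ is a Borel cover of $X$ up to an $\omu$-negligible set, each $\phi_i \colon V_i \to \R^{k}$ is bi-Lipschitz with respect to $\odist$ for a \emph{single} integer $k$, and $(\phi_i)_\#(\omu \measrestr V_i) \ll \cH^{k}$. These charts would then transfer directly to $(X,\dist,\mu)$: since $\dist \le \odist \le C \dist$, each $\phi_i$ remains bi-Lipschitz with respect to $\dist$; the $k$-dimensional Hausdorff measures built from $\dist$ and from $\odist$ are mutually absolutely continuous with densities bounded by a power of $C$; and $\mu$ and $\omu$ share the same null sets because they are equivalent.

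The remaining and most delicate point is the upper bound $k \le n$ on the essential dimension, since \cref{crl:limit} by itself only yields $k \le N$, with $N$ potentially larger than $n$. To bridge this gap I would revisit the approximating sequence $\{(M_\ell,g_\ell,\mu_\ell,o_\ell)\}$ defining the Dynkin limit: applying \tref{Maintheo} to each $(M_\ell,g_\ell)$ produces a uniformly bounded conformal factor $e^{2f_\ell}$ such that $(M_\ell, e^{2f_\ell}g_\ell, c_\ell e^{2f_\ell}\nu_{g_\ell})$ is $\RCD(K/T,N)$ and uniformly bi-Lipschitz equivalent to $(M_\ell,g_\ell,\mu_\ell)$. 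Up to extraction, this new sequence pmGH-converges to a space bi-Lipschitz equivalent to $(X,\odist,\omu)$, and each term has essential dimension exactly $n$ since it is a smooth weighted $n$-manifold. Stability of the essential dimension along pmGH sequences of RCD spaces (again a consequence of the Bru\'e--Semola theory) then yields $k \le n$. I expect this last step---the passage of the essential dimension to the pmGH limit---to be the subtlest point, as bi-Lipschitz equivalence alone does not give equality of essential dimensions and one must genuinely appeal to the stability of the regular set for RCD sequences.
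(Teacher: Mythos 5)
Your proposal is correct and follows essentially the same route as the paper: apply \tref{Maintheo} to each approximating manifold, extract a pmGH limit which is an $\RCD(K/T,N)$ space bi-Lipschitz equivalent to $(X,\dist,\mu)$, invoke Mondino--Naber and Bru\'e--Semola for $k$-rectifiability with a single $k$, transfer the charts through the bi-Lipschitz equivalence, and bound $k\le n$ by comparing with the smooth approximating weighted $n$-manifolds. The only cosmetic difference is that the precise tool for the last step is the \emph{lower semicontinuity} of the essential dimension under pmGH convergence (Kitabeppu, Bru\'e--Pasqualetto--Semola) rather than full ``stability'', which is exactly the one-sided estimate you need.
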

\proof Let  $(X,\dist,\mu,o) \leftarrow (M_\ell^n,g_\ell,\mu_\ell = c_\ell \nu_{g_\ell},o_\ell)$ be a Dynkin limit space. For any $\ell$, \tref{Maintheo} provides $K \ge 0$ and $N \in [1,+\infty)$ depending on $n$ and $\gamma$ only and a function $h_\ell\in \cC^2(M_\ell)$ with $0\le h_\ell\le C(n,\gamma)$ such that the weighted Riemannian manifold $(M_\ell^n,  \bg_\ell\df e^{2h_\ell} g_\ell, \bar\nu_\ell\df e^{2h_\ell} \nu_{g_\ell})$ satisfies the $\mathrm{RCD}(-K/T,N)$ condition. The space of pointed  $\RCD(-K/T, N)$ spaces is compact in the pointed measured Gromov--Hausdorff topology, hence we can assume that, up to extracting a subsequence, the sequence $\{ (M^n_\ell, \dist_{\bg_\ell},\bar \mu_\ell \df c_\ell \bar\nu_\ell,o_\ell)\}$ converges to some $\RCD(-K/T, N)$ space $(X,\bar \dist,\bar \mu,o)$. By \cite[Theorem 1.1]{MondinoNaber} and \cite[Theorem 0.1]{Bru__2020} there exists $k\in \{0,\ldots, \lfloor N \rfloor \}$ such that $(X,\bar \dist,\bar \mu)$ is $k$-rectifiable. But $\dist\le \bar \dist\le e^{C(n,\gamma)} \dist$ and $\mu\le \bar \mu\le e^{2C(n,\gamma)} \mu,$ hence $(X, \dist, \mu)$ is $k$-rectifiable too. That $k$ is at most $n$ follows from the lower semicontinuity of the essential dimension of finite-dimensional $\RCD$ spaces under pointed measured Gromov--Hausdorff convergence \cite{Kitabeppu}, see also \cite{BPS}.
\endproof
\begin{rem}{\bf About the Mosco convergence of the Energy forms.}
In the setting of the proof of \pref{prop:Dynlinreg},  the Dirichlet forms $\cE_\ell$ and $\bar \cE_\ell$ defined for any $\ell$ by
$$\cE_\ell(u)\df \int_{M_\ell} |du|^2_{g_\ell} d\mu_\ell, \qquad \bar \cE_\ell(u)\df \int_{M_\ell} |du|^2_{\bg_\ell} d\bar \mu_\ell,$$
coincide. We know that the pointed measured Gromov-Hausdorff convergence of $\RCD$ spaces implies the Mosco convergence of the Cheeger energy \cite{GMS}, hence we get that the sequence $\left\{(M_\ell,\dist_{g_\ell}, \mu_\ell,\cE_\ell,  o_\ell)\right\}_\ell$ converges in the pointed Mosco--Gromov--Hausdorff topology to 
$(X,\dist,\mu,\cE,o)$ where $\cE$ is the Cheeger energy of $(X,\bar \dist,\bar \mu)$.  This does not implies, a priori,  that in the setting of  \dref{def:dynkin}, the pointed measured Gromov--Hausdorff convergence self-improves to a Mosco convergence of the energy. Indeed, several choices of functions $f_\ell$ can be made and the limit distance $\bar \dist$ and limit measure $\bar \mu$ could depend on the subsequence. Compare with \cite[Theorem 4.8]{CMT1}.
\end{rem}
\subsection{Kato limit spaces}

Let $T>0$ and $f\colon (0,T]\rightarrow \R_+$ be a non-decreasing function satisfying $$f(T)<\frac1{n-2}\quad \text{and} \quad \lim_{t\to 0+} f(t)=0.$$
The next definition is the natural variant of the one of Kato limit used in our previous articles (recall the discussion at the beginning of the previous section).

\begin{defi}\label{def:kato} A renormalized limit space $(X,\dist,\mu,o) \leftarrow (M_\ell^n,g_\ell,\mu_\ell,o_\ell)$ is called a Kato limit space associated to $f$ if $\mbox{k}_{t}(M_\ell^n,g_\ell)\le f(t)$ for any $\ell$ and $t \in(0,T]$.
\end{defi}
\pref{prop:Dynlinreg} together with \cite[Theorem 4.4]{CMT2} implies the following:
\begin{prop}\label{prop:Katoreg} Let $(X,\dist,\mu,o) \leftarrow (M_\ell^n,g_\ell,\mu_\ell,o_\ell)$ be a  Kato limit space. Then there exists $k\in \{0,\ldots, n\}$ such that $(X,\dist,\mu)$ is $k$-rectifiable, and for $\mu$-almost every $x\in X$ the space $(\R^k, \dist_{\text{eucl}}, \cH^k,0)$ is the unique metric measure tangent cone of $(X,\dist,\mu)$  at $x$. \end{prop}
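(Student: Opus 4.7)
The plan is to deduce the proposition from a direct combination of \pref{prop:Dynlinreg} with \cite[Theorem 4.4]{CMT2}, observing that a Kato limit space is automatically a Dynkin limit space.

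First, since $f$ is non-decreasing and satisfies $f(T)<1/(n-2)$, for every $\ell$ one has
\[
\kato_T(M_\ell^n,g_\ell)\le f(T)<\frac{1}{n-2}.
\]
Thus the sequence $\{(M_\ell^n,g_\ell,\mu_\ell,o_\ell)\}$ satisfies the uniform Dynkin bound \eqref{eq:uniform} with $\gamma\df f(T)$, so $(X,\dist,\mu,o)$ is a Dynkin limit space in the sense of \dref{def:renormalized}. Applying \pref{prop:Dynlinreg} then yields an integer $k\le n$ such that $(X,\dist,\mu)$ is $k$-rectifiable.

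It remains to identify the metric measure tangent cone at $\mu$-a.e.\ point. Here the additional Kato assumption $\lim_{t\to 0+}f(t)=0$ is the key input: it expresses that the Ricci control becomes asymptotically flat at small scales, which is precisely the hypothesis under which \cite[Theorem 4.4]{CMT2} shows that at $\mu$-a.e.\ point $x\in X$ the metric measure tangent cones are isomorphic to some Euclidean space $(\R^{k(x)},\dist_{\text{eucl}},\cH^{k(x)},0)$, with uniqueness of the blow-up. Combining this with the $k$-rectifiable structure from the previous step forces $k(x)=k$ for $\mu$-a.e.\ $x$: indeed, around a $\mu$-a.e.\ point $x$ the rectifiable chart gives a bi-Lipschitz identification of a neighbourhood of $x$ with a subset of $\R^k$ under which $\mu$ pushes forward to a measure absolutely continuous with respect to $\cH^k$, so any metric measure blow-up must have Hausdorff dimension $k$.

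The main (and essentially only) obstacle is to invoke \cite[Theorem 4.4]{CMT2} in the present complete (rather than closed) setting: that theorem was originally proved for closed manifolds, and its proof relied on the cut-off functions and Li--Yau type inequality available in that setting. This is precisely what has been unlocked in \sref{consman}: by \pref{prop:cutoff} and the discussion following it (see \tref{Theo:heatratio}), the conclusions of \cite{CMT1,CMT2} now extend verbatim to sequences of complete weighted Riemannian manifolds. Once this extension is granted, the proof above reduces to a one-line application of the two cited results.
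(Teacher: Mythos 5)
Your proposal is correct and follows exactly the route the paper takes: the paper's entire proof is the one-line observation that \pref{prop:Dynlinreg} combined with \cite[Theorem 4.4]{CMT2} (now valid for complete manifolds thanks to the cut-off functions of \pref{prop:cutoff}) gives the statement. Your additional remarks — that $\gamma=f(T)<1/(n-2)$ makes a Kato limit space a Dynkin limit space, and that the rectifiable charts force the a.e.\ tangent dimension to equal $k$ — correctly fill in the details the paper leaves implicit.
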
 

\subsection{Non-collapsed strong Kato limit spaces} For $T>0$, let $f\colon (0,T]\rightarrow \R_+$ be a non-decreasing function satisfying \eqref{eq:newstrong}.  In our previous work, we defined strong Kato limit spaces through the stronger integrability condition
\[
\int_0^T \frac{\sqrt{f(s)}}{s} \di s <\infty.
\]
However, as explained in Remark \ref{rem:newstrong},  we are now in a position to work with the weaker \eqref{eq:newstrong}. This leads to the following new definition.

\begin{defi}\label{def:ncSkato} A pointed metric space $(X,\dist,o)$ is called a non-collapsed strong Kato limit space associated to $f$ if there exists a sequence of pointed complete Riemannian manifolds $\{ (M_\ell^n,g_\ell,o_\ell)\}$ such that :
\begin{enumerate}
\item $\mbox{k}_{t}(M_\ell^n,g_\ell)\le f(t)$ for any $\ell$ and $t \in(0,T]$, 
\item there exists $v >0$ such that $\nu_{g_\ell}(B_{\sqrt{T}}(o_\ell)) \ge v$ for all $\ell$,
\item $(M_\ell^n,g_\ell,o_\ell) \to (X,\dist,o)$ in the pointed Gromov--Hausdorff topology.
\end{enumerate}
\end{defi}

\tref{th:AlmostBishopGromov} has the following consequence.

\begin{prop}
Let $(X,\dist,o) \leftarrow (M_\ell^n,g_\ell,o_\ell)$ be a non-collapsed strong Kato limit space. Then for any $x \in X$,  the volume density 
$$\uptheta_X(x)=\lim_{r\downarrow 0} \frac{\cH^n(B_{r}(x))}{\omega_n r^n}$$ is well-defined. Moreover,  there exists a function $c : (0,+\infty) \to (0,+\infty)$ depending on $n$,  $T$ and $f$ only such that for any $x \in X$,
\begin{equation}\label{eq:densitycontrol}
c(\dist(o,x)) v\le \uptheta_X(x)\le 1.
\end{equation}
\end{prop}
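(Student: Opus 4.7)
The plan is to derive uniform two-sided estimates on the volume ratios $\cV_\ell(y,r):=\nu_{g_\ell}(B_r(y))/(\omega_n r^n)$ of the approximating manifolds from Theorem~\ref{th:AlmostBishopGromov}, and to pass these to the Gromov--Hausdorff limit via non-collapsed volume convergence. Fix $\eta\in(0,1-1/\sqrt{2})$ and write $\Psi(r):=C(n)\Phi(r)/\eta$. Since each $(M_\ell^n,g_\ell)$ is a smooth manifold, $\cV_\ell(y,r)\to 1$ as $r\to 0$ for every $y\in M_\ell$, and \eqref{eq:newstrong} ensures $\Phi(r)\to 0$; letting $r\to 0$ in Theorem~\ref{th:AlmostBishopGromov} will produce the uniform upper bound
\[
\cV_\ell(y,R) \le e^{\Psi(R)},\qquad R\in(0,\sqrt{T}],
\]
independent of $y$ and $\ell$.

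For a matching lower bound at $x\in X$, I would transport the non-collapsing hypothesis $\nu_{g_\ell}(B_{\sqrt{T}}(o_\ell))\ge v$ to points $x_\ell\to x$. Because $f(T)<1/(n-2)$ forces the Dynkin condition \eqref{eq:NewDynkin}, Proposition~\ref{prop:DP} supplies a uniform doubling constant $c_0=c_0(n,f)\in(0,1)$ at scales $\le\sqrt{T}$. Walking along an approximate minimising geodesic from $o_\ell$ to $x_\ell$ in $k\lesssim 1+\dist(o,x)/\sqrt{T}$ steps of length $\sqrt{T}/2$, the inclusion $B_{\sqrt{T}/2}(y_i)\subset B_{\sqrt{T}}(y_{i+1})$ together with iterated doubling yields $\nu_{g_\ell}(B_{\sqrt{T}}(x_\ell))\ge c_0^{\,k}\,v$. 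Another application of the monotonicity, from scale $\sqrt{T}$ down to $r$, then gives the uniform lower bound $\cV_\ell(x_\ell,r)\ge c(\dist(o,x))\,v$ for every $r\in(0,(1-\eta)\sqrt{T}]$ and every large $\ell$, with a positive function $c$ depending only on $n$ and $f$.

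The main obstacle is passing these bounds to the pointed Gromov--Hausdorff limit, namely establishing the non-collapsed volume convergence $\nu_{g_\ell}(B_r(x_\ell))\to\cH^n(B_r(x))$ for a sufficiently rich set of radii $r$. In the classical Ricci case this is due to Cheeger--Colding; in the Kato framework I would derive it from Theorem~\ref{Maintheo}, by which each $(M_\ell,g_\ell)$ is uniformly bi-Lipschitz to a weighted Riemannian manifold satisfying $\RCD(K/T,N)$. The uniform bi-Lipschitz control on distances and densities, combined with stability of $\RCD$ under pointed measured Gromov--Hausdorff convergence, produces a measured-GH subsequential limit of the conformally rescaled spaces; the non-collapsing hypothesis rules out degeneration of the reference measure and allows one to identify its restriction to $X$ with a measure comparable to $\cH^n$. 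This should essentially reproduce in the complete setting the volume convergence proved in \cite{CMT2} for closed manifolds.

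Granted this convergence, the conclusion is automatic. Passing the two-sided bounds to the limit gives
\[
c(\dist(o,x))\,v \le \frac{\cH^n(B_r(x))}{\omega_n r^n}\le e^{\Psi(r)},\qquad r\in(0,(1-\eta)\sqrt{T}],
\]
and the almost monotonicity itself passes to the limit, so $\psi(r):=e^{-\Psi(r)}\cH^n(B_r(x))/(\omega_n r^n)$ satisfies $\psi(R)\le\psi(r)$ whenever $r\le(1-\eta)R\le\sqrt{T}$. For any fixed $R_0>0$ this forces $\liminf_{r\to 0}\psi(r)\ge\psi(R_0)$, hence $\liminf_{r\to 0}\psi(r)\ge\limsup_{R_0\to 0}\psi(R_0)$, so the limit $\lim_{r\to 0}\psi(r)$ exists. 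Since $\Phi(r)\to 0$, $\uptheta_X(x)$ is well-defined, and letting $r\to 0$ in the two-sided bound yields $c(\dist(o,x))\,v\le\uptheta_X(x)\le 1$.
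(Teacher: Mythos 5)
Your overall strategy matches the paper's: derive two-sided volume ratio bounds on the approximating manifolds from Theorem~\ref{th:AlmostBishopGromov}, pass them and the almost monotonicity to the limit via volume convergence, then take successive limits in $r$ and $R$ to extract the density. The upper bound via $r\to 0$ on the smooth manifolds and the lower bound via a doubling chain along an approximate geodesic from $o_\ell$ to $x_\ell$ are both sound and in the spirit of the references \cite[Corollary 5.13, Remark 2.18]{CMT1} that the paper invokes. The $\liminf$/$\limsup$ argument at the end is exactly the paper's.

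The genuine gap is in the volume convergence step, which is in fact the crux of the proof. You propose to deduce $\nu_{g_\ell}(B_r(x_\ell))\to\cH^n(B_r(x))$ from Theorem~\ref{Maintheo}: pass to measured Gromov--Hausdorff limits of the conformally changed $\RCD(K/T,N)$ spaces $(M_\ell,\bar g_\ell,\bar\nu_\ell)$ and argue that non-collapsing forces the limit measure to be ``comparable to $\cH^n$''. This does not close the argument, for two reasons. First, comparability is not enough: if $\nu_{g_\ell}(B_r(x_\ell))\to\mu(B_r(x))$ for some measure $\mu$ merely comparable to $\cH^n$, then the almost monotonicity in the limit governs $\mu(B_r(x))/(\omega_n r^n)$, not $\cH^n(B_r(x))/(\omega_n r^n)$; the existence of $\uptheta_X$ and the bound $\uptheta_X\le 1$ both require the limit measure to be \emph{equal} to $\cH^n$. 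Second, the non-collapsed $\RCD$ machinery of De Philippis--Gigli does not apply here: the reference measures $\bar\nu_\ell=e^{2f_\ell}\nu_{g_\ell}$ are weighted, not $\cH^N$, and the dimension parameter $N=n+q$ strictly exceeds $n$, so $\cH^N$ would vanish on the $n$-manifolds $M_\ell$. What the paper actually does is different: it invokes the good cut-off functions of Proposition~\ref{prop:cutoff} to adapt Colding's volume convergence theorem (as done in \cite{CMT1,CMT2} for closed manifolds) to the complete setting, giving directly $(M_\ell,g_\ell,\nu_{g_\ell},o_\ell)\to(X,\dist,\cH^n,o)$ in the pointed measured Gromov--Hausdorff topology. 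This is the ingredient your sketch would need but does not supply; Theorem~\ref{Maintheo} alone is not a substitute for it.
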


\begin{proof}
Because of the existence of cut-off functions established in Proposition \ref{prop:cutoff}, the adaptation of the proof of Colding's volume convergence theorem \cite{Colding_1997,  CheegerPisa} done in \cite[Section 7, especially Proposition 7.5]{CMT1} carries over to the complete setting. As a consequence,  one has $(M_\ell, g_\ell,\nu_{g_\ell}, o_\ell) \to (X,\dist,\cH^n,o)$ in the pointed measured Gromov-Hausdorff topology.  Therefore, the conclusion of  \tref{th:AlmostBishopGromov} passes to the limit and yields that, for any $x \in X$, $R \in (0, \sqrt{T}]$,  $\eta \in (0,1-1/\sqrt{2})$ and $r \le (1-\eta)R$,
\[
\frac{\cH^n(B_{R}(x))}{\omega_n R^n}  \exp \left(-\frac{C(n)\Phi(R)}{\eta} \right) \le \frac{\cH^n(B_{r}(x))}{\omega_n r^n} \exp\left(-\frac{C(n)\Phi(r)}{\eta}\right).
\]
Taking successively the limit inferior as $r \downarrow 0$ and then the limit superior as $R \downarrow 0$,  we obtain that $\theta_X(x)$ is well-defined.   As for  \eqref{eq:densitycontrol}, the lower bound is obtained as in \cite[Remark 2.18]{CMT1} and the upper bound  as in \cite[Corollary 5.13]{CMT1}.
\end{proof}

In \cite[Corollary 5.20]{CMT2}, we proved that there exists $\delta=\delta(n,f)\in (0,1)$ such that for any non-collapsed strong Kato limit space $X$ associated to $f$,  the dense open subset
$$\{x\in X\colon \theta_X(x)>1-\delta\}$$
is a topological manifold with H\"older regularity. This result was a consequence of the intrinsic Reifenberg theorem of Cheeger and Colding \cite[Theorem A.1.1]{ChCo97} applied to balls $B_{\sqrt{\tau}}(x)$ where the heat ratio is almost $1$,  i.e.~such that for small enough $\delta>0$ ,
$$1 \le (4\pi \tau)^{\frac n2}\, H(\tau,x,x) \le \frac{1}{1-\delta}\cdot $$
Indeed,  as pointed out in \cite[Remark 5.4]{CMT2}, we have
\[
\theta_X(x)^{-1} = \lim_{t \to 0} (4\pi \tau)^{\frac n2}\, H(\tau,x,x)
\]
In order to apply the Cheeger--Colding--Reifenberg theorem, we had to prove a Reifenberg property for these balls $B_{\sqrt{\tau}}(x)$. Recall that the latter asks that for any $y\in B_{\sqrt{\tau}}(x)$ and $s\in (0,\sqrt{\tau}/2)$,
\begin{equation}\label{eq:Reif}
\dist_{GH}\left( B_s(y), \bB_s^n\right)\le \epsilon \,s.
\end{equation}
We obtained these property through two key results:
\begin{itemize}
\item the almost monotonicity of the heat ratio (that is to say  Theorem \ref{Theo:heatratio} for the special value $s=t/2=\tau/4$);
\item a rigidity result for $\RCD(0,n)$ spaces $(Z,d_Z,\cH^n)$ for which there exist $\tau>0$ and $z\in Z$ such that $(4\pi \tau)^{\frac n2}\, H(\tau,z,z)=1$ (see \cite[End of Proof of Theorem 5.9]{CMT2}).
\end{itemize}
Similarly, the almost monotonicity of the volume ratio granted by \tref{th:AlmostBishopGromov} and the rigidity of $\RCD(0,n)$ spaces with maximal volume ratio \cite[Theorem 1.1]{DPG} naturally lead to the following result for balls with almost maximal volume:
\begin{theo}\label{th:reifenberg}Let $(X,\dist,o)$ be a non-collapsed strong Kato limit space associated to $f$. Then for any $\epsilon\in (0,1)$ there exists $\delta\in (0,1)$ depending only on $n,f,\epsilon$ such that if $x\in X$ and $r \in (0, \delta\sqrt{T}]$ satisfy $$\frac{\cH^n\left(B_r(x)\right)}{\omega_n r^n}\ge 1-\delta$$ then
the ball $B_{r/2}(x)$ satisfies the Reifenberg property \eqref{eq:Reif}.
\end{theo}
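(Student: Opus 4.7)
The plan is a blow-up and contradiction argument in the spirit of Cheeger and Colding's intrinsic Reifenberg theorem, with the two main ingredients of this paper -- namely, \tref{Maintheo} (bi-Lipschitz $\RCD$ structure) and \tref{th:AlmostBishopGromov} (almost monotonicity of volume ratios) -- playing the role that the lower Ricci bound and the classical Bishop-Gromov inequality play in the smooth case. Suppose the conclusion fails: then there exist $\epsilon_0 > 0$ and sequences $\delta_j \downarrow 0$, $x_j \in X$, $r_j \in (0,\delta_j\sqrt{T}]$, $y_j \in B_{r_j/2}(x_j)$ and $s_j \in (0, r_j/4)$ with $\cH^n(B_{r_j}(x_j)) \ge (1-\delta_j)\omega_n r_j^n$ but $\dist_{GH}(B_{s_j}(y_j),\bB^n_{s_j}) > \epsilon_0 s_j$. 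The adapted Colding volume convergence recorded just before this theorem, combined with a diagonal extraction, allows me to transfer the problem to the approximating manifolds: I obtain a sequence of complete Riemannian manifolds $(M_j^n,g_j)$ satisfying \eqref{eq:bound} for the given $f$, with marked points $\tilde x_j,\tilde y_j \in M_j$ inheriting, up to $o(1)$, the same density and Reifenberg-violation properties.

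The heart of the argument is a two-scale blow-up. First, rescale by $r_j^{-2}$, setting $\tilde g_j \df g_j/r_j^2$. The scaling of the heat kernel yields $\kato_t(M_j,\tilde g_j) = \kato_{t r_j^2}(M_j,g_j) \le f(t r_j^2) \to 0$, using that the integrability condition in \eqref{eq:newstrong} forces $f(t) \to 0$ as $t \to 0^+$. For $j$ large, \tref{Maintheo} in the improved form \eqref{eq:NewDynkin2} then provides a bi-Lipschitz equivalence between $(M_j,\tilde g_j)$ and an $\RCD(K_j,N_j)$ space with $K_j \to 0$, $N_j \to n$ and bi-Lipschitz constant tending to $1$. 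Gromov compactness, together with the normalization $\cV_{\tilde g_j}(\tilde x_j,1) = \cV_{g_j}(\tilde x_j,r_j) \to 1$, yields a subsequential pointed measured Gromov-Hausdorff limit $(Z_\infty,\dist_\infty,\cH^n,x_\infty)$ that is a non-collapsed $\RCD(0,n)$ space satisfying $\cH^n(B_1(x_\infty)) = \omega_n$. The rigidity theorem of De Philippis-Gigli \cite[Theorem 1.1]{DPG} then forces $Z_\infty \simeq \mathbb R^n$. Extracting once more, $\tilde y_j \to y_\infty \in \overline{B}_{1/2}(x_\infty) \subset \mathbb R^n$, and volume convergence gives $\cV_{g_j}(\tilde y_j,r_j) = \cV_{\tilde g_j}(\tilde y_j,1) \to 1$.

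I then propagate this almost-maximality from scale $r_j$ down to scale $s_j$ by applying \tref{th:AlmostBishopGromov} on $(M_j,g_j)$ at $\tilde y_j$ with $\eta = 1/4 < 1 - 1/\sqrt{2}$ and the scales $R = r_j$, $r = s_j \le r_j/4 \le (3/4)r_j$:
\[
\cV_{g_j}(\tilde y_j,s_j) \;\ge\; \cV_{g_j}(\tilde y_j,r_j)\,\exp\!\bigl(-4 C(n)(\Phi(r_j) - \Phi(s_j))\bigr) \;\longrightarrow\; 1,
\]
since $\Phi(r_j) \to 0$. A second blow-up then follows: setting $\hat g_j \df g_j/s_j^2$ and centering at $\tilde y_j$, the same combination of \tref{Maintheo}, Gromov compactness and \cite[Theorem 1.1]{DPG} produces a non-collapsed $\RCD(0,n)$ pmGH subsequential limit $Y_\infty \simeq \mathbb R^n$. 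Hence the unit $\hat g_j$-balls centered at $\tilde y_j$ converge in Gromov-Hausdorff distance to $\bB^n_1$, so
\[
\frac{\dist_{GH}(B_{s_j}(\tilde y_j),\bB^n_{s_j})}{s_j} \;\longrightarrow\; 0,
\]
contradicting the choice of $\tilde y_j$ and $s_j$.

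I expect the main difficulty to lie in the intermediate step: a direct application of the almost monotonicity at $\tilde y_j$ starting only from the density assumption at $\tilde x_j$ would yield a positive lower bound on $\cV_{g_j}(\tilde y_j,s_j)$ of order $2^{-n}$, which is too weak for the rigidity argument to conclude. It is precisely the combination of the first blow-up (which identifies a flat Euclidean neighborhood around $\tilde x_j$ via \cite{DPG} and thereby pins down the density at $\tilde y_j$ at the intermediate scale $r_j$) with the almost monotonicity formula (that transports this information down to the scale $s_j$) that enables the second blow-up to be carried out meaningfully.
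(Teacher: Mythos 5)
Your argument follows essentially the same route the paper intends: the paper itself only sketches this proof, stating that it follows from combining the almost monotonicity of the volume ratio (\tref{th:AlmostBishopGromov}) with the rigidity of non-collapsed $\RCD(0,n)$ spaces of maximal volume ratio, in exact parallel with the heat-kernel-ratio argument of \cite[Theorem 5.9]{CMT2}; your two-scale blow-up with the almost monotonicity bridging the scales $r_j$ and $s_j$ is precisely that scheme. Two small points to tighten: since $\delta$ must depend only on $n,f,\epsilon$, the contradiction sequence should allow the space $X_j$ itself to vary (harmless, as everything is transferred to the approximating manifolds anyway); and the first blow-up limit is only identified as Euclidean on $B_1(x_\infty)$ (the almost monotonicity propagates almost-maximality of $\cV$ downward in scale, not upward), so neither $Z_\infty\simeq\R^n$ globally nor $\cV_{g_j}(\tilde y_j,r_j)\to 1$ is justified as stated — but since $y_\infty\in \overline{B}_{1/2}(x_\infty)$ gives $B_{1/2}(y_\infty)\subset B_1(x_\infty)$, you do get $\cV_{g_j}(\tilde y_j,r_j/2)\to 1$, and applying \tref{th:AlmostBishopGromov} with $R=r_j/2$, $r=s_j<r_j/4<(1-\eta)R$ for $\eta=1/4$ repairs the step with no other change.
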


As in \cite{CMT2}, there are two alternative ways for deriving the H\"older regularity of balls satisfying the Reifenberg property: 
either by the intrinsic Reifenberg theorem of Cheeger and Colding mentioned above, or as a consequence of a recent idea of Cheeger, Jiang and Naber \cite{CJN} based on a transformation theorem. The advantage of the latter approach is that it gives a more quantitative statement and proves that harmonic almost splitting maps are bi-Hölder.  In \cite[Theorem 5.14]{CMT2} we proved a transformation theorem under a strong Kato bound,  which can be easily rephrased for complete manifolds and under our weaker condition \eqref{eq:newstrong} on the function $f$. Then the same argument as in \cite{CJN} yields the following :

 \begin{theo}\label{theo:CJN} Let $f\colon (0,T]\rightarrow \R_+$ be a non-decreasing function satisfying $$f(T)<\frac1{n-2}\text{ and }\int_0^T\frac{ f(t)}{t} \di t<\infty.$$ For any $\alpha\in (0,1)$ there exists $\delta\in (0,1)$ depending only on $n,f,\alpha$ such that if $(M^n,g)$ is a complete Riemannian manifold satisfying \eqref{eq:bound}, for any   $x\in M$ and $r\in (0,\sqrt{T})$ for which
 \begin{equation}\label{controlKato}\mbox{k}_{r^2}(M^n,g)<\delta\end{equation}
 and there exists a harmonic map $h\colon B_r(x)\rightarrow \R^n$ with $h(x)=0$ and
\begin{equation}\label{controlh}r^2\fint_{B_r(x)} |\nabla^g dh|^2 \di \nu_g+\fint_{B_r(x)} \left|dh{}^tdh-\text{I}_n\right| \di \nu_g \le \delta,\end{equation} then
 \begin{enumerate}[i)]
 \item $h\colon B_{r/2}(x)\rightarrow \R^n$ is a diffeomorphism onto its image;
 \item $\bB^n_{\alpha r/2}\subset h\left(B_{r/2}(x)\right)$;
 \item $\displaystyle \forall y,z\in B_{r/2}(x)\colon
\alpha r^{1-\frac1\alpha}\,\dist_g^{\frac1 \alpha}(y,z) \le  \| h(y)-h(z)\|\le \alpha^{-1}\, \dist_g(y,z).$
 \end{enumerate}
 \end{theo}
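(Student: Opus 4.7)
The plan is to import the iteration scheme of Cheeger--Jiang--Naber \cite{CJN}, feeding in the transformation theorem \cite[Theorem 5.14]{CMT2} rephrased for complete manifolds and under the weaker assumption \eqref{eq:newstrong} on $f$. First, I would carry out this rephrasing. The proof in \cite{CMT2} relies on three ingredients that are now all available in the complete setting: the good cut-off functions of Proposition \ref{prop:cutoff}, the local doubling and $L^1$-Poincar\'e inequalities of Proposition \ref{prop:DP}, and the improved Li--Yau / heat-ratio monotonicity of Theorem \ref{Theo:heatratio} and Remark \ref{rem:newstrong}. An inspection of that proof reveals that only the integrability $\int_0^T f(t)/t\,\di t<\infty$ is actually used, not the full strong Kato bound. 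The output would be: there exists $\delta_0=\delta_0(n,f)$ such that whenever $h$ satisfies \eqref{controlKato}--\eqref{controlh} with $\delta\le\delta_0$, then for every $y\in B_{r/2}(x)$ and every $s\in (0,r/2]$ there is a lower triangular matrix $A_{y,s}\in GL_n(\R)$ for which $A_{y,s}(h-h(y))$ is an $\eta$-splitting map on $B_s(y)$, where $\eta=\eta(n,f,\delta)\to 0$ as $\delta\to 0$, together with a growth control $\|A_{y,s}\|+\|A_{y,s}^{-1}\|\le C(r/s)^{\eta}$.

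Second, following \cite[Section 7]{CJN}, I would deduce the bi-H\"older estimate iii) by dyadic iteration. For $y,z\in B_{r/2}(x)$, set $s\df\dist_g(y,z)$ and apply the transformation at scale $s$ around $y$. Since $A_{y,s}(h-h(y))$ is an $\eta$-splitting map on $B_s(y)$, a Moser-type gradient bound combined with the Poincar\'e inequality gives $c\, s\le \|A_{y,s}(h(y)-h(z))\|\le s$, and the growth bound on $\|A_{y,s}^{-1}\|$ yields
\[
\|h(y)-h(z)\|\ge c\,(s/r)^{\eta}\, s,
\]
which is the lower H\"older bound in iii) upon taking $\alpha$ determined by $\eta$ (and by choosing $\delta$ small enough depending on the prescribed $\alpha$). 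The upper bound $\|h(y)-h(z)\|\le \alpha^{-1}\dist_g(y,z)$ is a consequence of an $L^\infty$ bound on $|dh|$, which follows from \eqref{controlh} via the Li--Yau gradient estimate valid under \eqref{eq:bound}.

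Third, for i) and ii): the lower H\"older bound in iii) forces $h$ to be injective on $B_{r/2}(x)$; applying the transformation at a sufficiently small scale around each $y\in B_{r/2}(x)$ and using the control on $\|A_{y,s}\|$ shows that $dh(y)$ is invertible, so $h$ is a local diffeomorphism, hence a global diffeomorphism onto its image. The inclusion $\bB^n_{\alpha r/2}\subset h(B_{r/2}(x))$ in ii) is then a standard topological-degree argument: by iii), $h(\partial B_{r/2}(x))$ lies outside the Euclidean ball of radius $\alpha r/2$ around $h(x)=0$; since $h$ is a local diffeomorphism and $h(x)=0$, the degree is nonzero, so every $v$ with $\|v\|<\alpha r/2$ is attained.

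The principal obstacle I expect is precisely the first step: verifying that the transformation theorem \cite[Theorem 5.14]{CMT2} still holds under \eqref{eq:newstrong} rather than the strong Kato bound originally used, and on complete manifolds rather than closed ones. This requires tracking how constants depend on $\Phi(T)$ in the stopping-time / $L^2$-flatness argument of \cite{CMT2}, and confirming that at each stage one can invoke the improved Li--Yau inequality of Remark \ref{rem:newstrong}, the heat-ratio monotonicity of Theorem \ref{Theo:heatratio}, and the cut-off / Poincar\'e machinery of Propositions \ref{prop:cutoff} and \ref{prop:DP} in place of the global tools previously available only on closed manifolds. Once this rephrased transformation theorem is in hand, the \cite{CJN} iteration is essentially formal, modulo the bookkeeping sketched above.
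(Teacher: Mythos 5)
Your proposal follows exactly the route the paper takes: the paper's entire argument for this theorem consists of observing that the transformation theorem of \cite[Theorem 5.14]{CMT2} can be rephrased for complete manifolds under the integrability condition \eqref{eq:newstrong} (using the cut-off functions, doubling/Poincar\'e, and the improved Li--Yau machinery now available in the complete setting), after which ``the same argument as in \cite{CJN}'' gives the bi-H\"older conclusion. Your elaboration of the Cheeger--Jiang--Naber iteration (lower triangular matrices, growth control on $\|A_{y,s}\|$, dyadic scales, degree argument for ii)) is a faithful and correct unpacking of that citation, so the two arguments coincide.
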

\begin{rems} We conclude with some remarks on the previous statement. 
\begin{itemize}
\item Following \cite[Theorem 1.2]{ChCo00} or \cite[Theorem A.1]{CMT3},  it can be shown that the existence of a harmonic map satisfying $\eqref{controlh}$ implies that 
$$\displaystyle \nu_g\left(B_r(x)\right)\ge \left(1-C(n)\sqrt{\delta}\right)\omega_n r^n.$$
\item The condition  $\mbox{k}_{r^2}(M^n,g)<\delta$ is satisfied for $r$ small enough, that is if $$ r\le \sqrt{T} \exp\left(-\int_0^T\frac{f(s)}{\delta s} \di s\right).$$
\item It can be shown that if $f$ is as \tref{theo:CJN} , then for any $\delta\in (0,1)$ there exists $\eta\in (0,1)$ depending only on $n,f,\delta$ such that if $(M^n,g)$ is a complete Riemannian manifold such that for all $t\in (0,T]$ and for some $r\in (0, \sqrt{T}]$
$$\mbox{k}_{t}(M^n,g)\le f(t)\text{ , } \mbox{k}_{r^2}(M^n,g)<\eta\text{ and 
 }\frac{\cH^n\left(B_{2r}(x)\right)}{\omega_n (2r)^n}\ge 1-\eta$$
then there exists a harmonic map $h\colon B_r(x)\rightarrow \R^n$ satisfying \eqref{controlh} (compare with \cite[Corollary 5.13]{CMT2}).
\end{itemize}
\end{rems}

\hfill

\subsection*{Statements and Declarations}

\subsubsection*{Conflict of interest. }The authors have no relevant financial or non-financial interests to disclose.

\subsubsection*{Data availability.} Data sharing not applicable to this article as no datasets were generated or analysed during the current study.

\bibliographystyle{alpha} 
\bibliography{Ref.bib}
\end{document}